\documentclass{scrartcl}

\title{Grothendieck Pairs, Functors and\\Profinite Invariants}
\author{Daniel Echtler}

\KOMAoptions{ abstract=true }

\usepackage{polyglossia}
\setmainlanguage{english}

\usepackage{csquotes}
\usepackage[dvipsnames]{xcolor}
\usepackage{pifont}
\usepackage{array}
\usepackage{tabularx}
\usepackage{booktabs}
\usepackage{centernot}

\usepackage[maxnames = 5, giveninits = true]{biblatex}
\DeclareFieldFormat{eprint:urn}{URN\addcolon\space\href{https://nbn-resolving.org/#1}{\nolinkurl{#1}}}

\usepackage{amsmath}
\usepackage{amssymb}
\usepackage{amsthm}
\usepackage{mathtools}

\usepackage{tikz-cd}

\usepackage{thmtools}

\theoremstyle{plain}
  \declaretheorem{theorem}
  \declaretheorem[sibling=theorem]{proposition}
  
  \declaretheorem[sibling=theorem]{corollary}

  \declaretheorem{Theorem}

\theoremstyle{definition}
  \declaretheorem[sibling=theorem]{definition}
  \declaretheorem[sibling=theorem]{remark}
  \declaretheorem[sibling=theorem]{example}
  \declaretheorem[sibling=theorem]{question}

\newcommand\N{{\mathbb{N}}}
\newcommand\Z{{\mathbb{Z}}}
\newcommand\Q{{\mathbb{Q}}}
\newcommand\R{{\mathbb{R}}}

\newcommand\Group{{\operatorname{Group}}}
\newcommand\Abelian{{\operatorname{Ab}}}
\newcommand\Vect{{\operatorname{Vect}}}
\newcommand\Set{{\operatorname{Set}}}

\DeclareMathOperator{\id}{id}
\DeclareMathOperator{\Hom}{Hom}
\DeclareMathOperator{\Bohr}{Bohr}
\DeclareMathOperator{\coker}{coker}

\newcommand\blank{\ensuremath{\;\cdot\;}}
\newcommand\Ab[1]{\ensuremath{{{#1}_\mathrm{ab}}}}

\AtBeginDocument{
  \let\tmp\phi
  \let\phi\varphi
  \let\varphi\tmp
  
  \let\tmp\epsilon
  \let\epsilon\varepsilon
  \let\varepsilon\tmp
  
  \let\tmp\hat
  \let\hat\widehat

  \let\tmp\bar
  \let\bar\overline
  
}

\usepackage{hyperref}
\usepackage{microtype}

\newcaptionname{english}{\chapterautorefname}{Chapter}
\newcaptionname{english}{\sectionautorefname}{Section}
\newcaptionname{english}{\subsectionautorefname}{Section}

\begin{document}
  \maketitle

  \begin{abstract}\noindent
    As another variation of profinite properties, we define the notion of
    \emph{natural profinite invariants}: functorial invariants, inducing
    isomorphisms for every Grothendieck pair. We give a characterisation of when
    adjoint functors are natural profinite invariants and describe a strategy
    on how to disprove natural profinite invariance. By means of examples we
    show that the notions of profinite invariance and natural profinite
    invariances are logically independent.
  \end{abstract}

  \section{Introduction}
    A property $\mathcal{P}$ of groups is called a \emph{profinite} property
    if for any two finitely generated residually finite groups $\Gamma$ and 
    $\Lambda$ with $\hat\Gamma \cong \hat\Lambda$, the group $\Gamma$ has
    $\mathcal{P}$ if and only if $\Lambda$ has $\mathcal{P}$.
    % Similarly, an invariant of groups is called profinite if it agrees for any two finitely
    % generated residually finite groups with isomorphic profinite completions.
    While there are some positive examples of properties being profinite
    \cite{polycyclic_profinite, kionke_schesler, kammeyer_kionke_raimbault_sauer, coxeter_groups_profinite},
    more often than not they fail to be profinite \cite{aka_property_t, lubotzky_finiteness, kammeyer_sauer, kionke_schesler, bridson_fixed_point, FA_not_profinite, bc_not_profinite}.

    An embedding $\phi\colon \Lambda \to \Gamma$ of finitely generated
    residually finite groups is called a \emph{Grothendieck pair} if the
    induced morphism $\hat\phi\colon \hat\Lambda \to \hat\Gamma$ is an
    isomorphism. These pairs were first studied by Grothendieck \cite{grothendieck}.
    In his paper, Grothendieck raised the question wheter there are non-trivial
    Grothendieck pairs of finitely presented groups. The first \emph{finitely generated}
    example were given by Platonov and Tavgen' \cite{platonov_tavgen}, while the first finitely presented
    example was given by Bridson and Grunewald \cite{bridson_grunewald}.

    As a variation of a profinite property, a property of groups $\mathcal{P}$
    is said to be \emph{up Grothendeick} if for every Grothendieck pair
    $\Lambda \to \Gamma$ where $\Lambda$ satisfies $\mathcal{P}$, $\Gamma$
    satisfies $\mathcal{P}$ as well. Similarly, $\mathcal{P}$ is said to be
    \emph{down Grothendieck} if $\Gamma$ having $\mathcal{P}$ implies that $\Lambda$
    also does.

    As yet another modification we propose the notion of \emph{natural profinite invariants}.
    \begin{definition}[Natural profinite invariant]
      A functor $F\colon \Group \to \mathcal{D}$, from the category of groups
      to any category $\mathcal{D}$, is a \emph{natural profinite invariant} if
      for every Grothendieck pair $\phi\colon \Lambda \to \Gamma$ the induced
      morphism
      \begin{equation*}
        F(\phi)\colon F(\Lambda) \longrightarrow F(\Gamma)
      \end{equation*}
      is an isomorphism.
    \end{definition}

    In disguise, these were already considered by Grothendieck \cite{grothendieck} as well as
    Jaikin-Zapirain and Lubotzky \cite{jaikin_lubotzky}. In both papers, the authors consider
    the class $\mathcal{C}$ of groups $K$ such that
    \begin{align*}
      \phi^\ast\colon \Hom_{\Group}(\Gamma, K) &\longrightarrow \Hom_{\Group}(\Lambda, K) \\
                                             f &\longmapsto     f \circ \phi
    \end{align*}
    is a bijection for every Grothendieck pair $\phi\colon\Lambda \to \Gamma$.
    In other words, they studied for which groups $K$ the functor
    \begin{equation*}
      \Hom_\Group(\blank, K) \colon \Group \longrightarrow \Set^\mathrm{op}
    \end{equation*}
    is a natural profinite invariant. This culminated in the following theorem.
    \begin{theorem}[{\cite{grothendieck}, \cite[Theorem 1.1]{jaikin_lubotzky}}]
      \label{thm:grothendieck_class_C}
      \leavevmode
      \begin{enumerate}
        \item The class $\mathcal{C}$ is closed under commensurability, inverse
          limits and direct products, and contains all nilpotent groups.

        \item If $A$ is a commutative ring and $G$ is an affine group scheme
          of finite type over $A$, then $G(A) \in \mathcal{C}$.

        \item Compact Hausdorff groups are in $\mathcal{C}$.

        \item Let $\Lambda$ be a finitely generated residually finite group.
          If $\Lambda \in \mathcal{C}$, then $\Lambda$ is \emph{left Grothendieck rigid},
          i.e.\ every Grothendieck pair $\Lambda \to \Gamma$ is trivial.
      \end{enumerate}
    \end{theorem}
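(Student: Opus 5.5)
Although this theorem is cited from Grothendieck and Jaikin-Zapirain--Lubotzky, here is how I would prove it. The basic tool is the following standard fact. Let $\phi\colon\Lambda\to\Gamma$ be a Grothendieck pair. Then for every finite group $Q$, restriction gives a bijection $\Hom(\Gamma,Q)\to\Hom(\Lambda,Q)$. Indeed, homomorphisms from a finitely generated group to $Q$ correspond to continuous homomorphisms from its profinite completion, and $\hat\phi$ is an isomorphism. The same holds with a profinite target $K$, since $K$ is an inverse limit of finite groups. This already gives part (3) for profinite groups, and also closure of $\mathcal{C}$ under inverse limits and direct products, because $\Hom(\blank,\varprojlim K_i)=\varprojlim\Hom(\blank,K_i)$ and the same holds for products.

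\textbf{Part (4).} Take $K=\Lambda$. Bijectivity of $\phi^\ast$ gives $r\colon\Gamma\to\Lambda$ with $r\circ\phi=\id_\Lambda$. Then $\hat r\circ\hat\phi=\id$, and since $\hat\phi$ is an isomorphism, $\hat r$ is its inverse, so $\hat r$ is injective. As $\Gamma$ is residually finite, $\Gamma\to\hat\Gamma$ is injective, hence $r$ is injective. Now $r$ is injective and satisfies $r\phi=\id$, so $\phi$ is surjective, and the pair is trivial.

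\textbf{Commensurability.} For commensurability I would first pass to finite-index subgroups and then to finite-index overgroups.
\begin{itemize}
\item[(a)] Finite-index subgroups. Let $K'\le K$ have finite index, and take a homomorphism $f\colon\Lambda\to K'$ with $K\in\mathcal{C}$. It extends uniquely to $\tilde f\colon\Gamma\to K$. We need $\tilde f(\Gamma)\subseteq K'$. Compose with the action $K\to\mathrm{Sym}(K/K')$; the bijection for the finite target $\mathrm{Sym}(K/K')$ shows the resulting maps agree with the ones coming from $f$. Hence $\Gamma$ stabilises the coset $K'$, so $\tilde f(\Gamma)\subseteq K'$.
\item[(b)] Finite-index overgroups. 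This needs an induction argument: $\Hom(\blank,K)$ for $K\supseteq K'$ with $[K:K']<\infty$ can be compared with $\Hom(\blank,K'\wr S_n)$, where $K'\wr S_n=K'^n\rtimes S_n$ and $K$ embeds in it. Closure under products and under extensions by finite groups, checked via the finite quotient $S_n$, gives $K'\wr S_n\in\mathcal{C}$, and (a) then gives $K\in\mathcal{C}$.
\end{itemize}

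\textbf{Nilpotent groups.} Nilpotent targets reduce via the lower central series and (abstract) abelian targets. For an abelian $A$, we have $\Hom(\Gamma,A)=\Hom(\Ab\Gamma,A)$. Moreover $\Ab\Lambda\to\Ab\Gamma$ is a map of finitely generated abelian groups that is an isomorphism on profinite completions, hence an isomorphism, since finitely generated abelian groups are residually finite and profinite completion detects the isomorphism. For a central extension step I would use obstruction and torsor arguments, with $H^2$ of nilpotent-type coefficients. This is the delicate part, since one would need $H^2(\Gamma,A)\to H^2(\Lambda,A)$ injective, which is not automatic. I would instead follow Grothendieck and reduce to the unipotent or nilpotent linear algebraic case, treating it within part (2).

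\textbf{Parts (2) and (3), the main obstacle.} Here the approach is Grothendieck's. Homomorphisms $\Gamma\to G(A)$ correspond to $A$-algebra maps $\mathcal{O}(G)\to\mathrm{Map}$-type representation rings, that is, to the representation scheme $\Hom(\Gamma,G)$, which is determined by the Tannakian category of finitely generated $A[\Gamma]$-modules. The key point is Grothendieck's theorem: for a Grothendieck pair, restriction $\mathrm{Mod}_{A[\Gamma]}^{f.g.\,A\text{-proj}}\to\mathrm{Mod}_{A[\Lambda]}$ is an equivalence. Over $\Z$ this uses that $\Z$-lattices with an action are controlled by finite quotients $\mathrm{GL}_n(\Z/m)$ together with the congruence completion; one then passes to general $A$ by base change, using finite presentation over $\Z$-subalgebras. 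I expect this equivalence to be the hardest step.

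For compact Hausdorff $K$, Peter--Weyl embeds $K$ into a product of $U(n)$'s. Each $U(n)$ is the real points of an affine group scheme, so it lies in $\mathcal{C}$ by (2). Closure under products then gives the product, and the same finite-index and stabiliser argument (using that the image is closed) places $K$ itself in $\mathcal{C}$.
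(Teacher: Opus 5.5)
The paper gives no proof of this theorem; it is quoted from Grothendieck and Jaikin-Zapirain--Lubotzky. So your proposal has to stand on its own. The formal parts are correct: finite and profinite targets, products and inverse limits, finite-index subgroups via the coset action, and part (4) via the retraction $r$.

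\textbf{Finite-index overgroups.} The first genuine gap is (b): $K$ is \emph{not} a finite-index subgroup of $K'\wr S_n$. Its intersection with $(K')^n$ is $\mathrm{core}_K(K')$, embedded by $c\mapsto(t_i^{-1}ct_i)_i$. This projects injectively to the first factor, so it has infinite index in $(K')^n$ whenever $K'$ is infinite and $n\ge 2$. Hence (a) does not apply. Nor is $\mathcal{C}$ closed under arbitrary subgroups: for a non-trivial Grothendieck pair, $\Lambda\le\hat\Lambda$ but $\Lambda\notin\mathcal{C}$ by (4). Moreover, the ``closure under extensions by finite groups'' you invoke for $(K')^n\rtimes S_n$ is exactly the normal case of (b), and you never prove it. Once you have it, apply it to $\mathrm{core}_K(K')\trianglelefteq K$ directly; the wreath product is superfluous. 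The missing argument:
\begin{itemize}
\item[$\cdot$] Let $N=\mathrm{core}_K(K')$, which lies in $\mathcal{C}$ by (a), and let $\pi\colon K\to K/N$. Given $f\colon\Lambda\to K$, extend $\pi\circ f$ to $\bar g\colon\Gamma\to K/N$.
\item[$\cdot$] Put $\Gamma_0=\ker\bar g$ and $\Lambda_0=\phi^{-1}(\Gamma_0)$. Then $\Lambda_0\to\Gamma_0$ is again a Grothendieck pair and $f(\Lambda_0)\subseteq N$. Let $f_0\colon\Gamma_0\to N$ be the extension of $f|_{\Lambda_0}$.
\item[$\cdot$] Since $\Gamma=\phi(\Lambda)\Gamma_0$, set $\tilde f(\phi(\lambda)\gamma_0)=f(\lambda)f_0(\gamma_0)$. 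This is well defined.
\item[$\cdot$] It is multiplicative because $\gamma_0\mapsto f_0(\phi(\lambda)^{-1}\gamma_0\phi(\lambda))$ and $\gamma_0\mapsto f(\lambda)^{-1}f_0(\gamma_0)f(\lambda)$ are homomorphisms $\Gamma_0\to N$ that agree on $\Lambda_0$, hence are equal. Uniqueness is similar.
\end{itemize}

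\textbf{Nilpotent groups.} This part is sketched, not proved, and you filter the wrong side. Filter the source instead. If $K$ has class $c$, then $\Hom(\Gamma,K)=\Hom(\Gamma/\gamma_{c+1}\Gamma,K)$. The map $\Lambda/\gamma_{c+1}\Lambda\to\Gamma/\gamma_{c+1}\Gamma$ is an isomorphism:
\begin{itemize}
\item[$\cdot$] it is injective, since it is an isomorphism on profinite completions and finitely generated nilpotent groups are residually finite;
\item[$\cdot$] it is surjective, since a subgroup of a nilpotent group mapping onto the abelianisation is everything, and your abelian case applies.
\end{itemize}
No $H^2$ obstruction theory is needed, and $K$ need not be linear.

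\textbf{Part (2).} Working modulo $m$ only yields an extension $\Gamma\to\mathrm{GL}_n(\hat\Z)$, which is automatic because the target is profinite. The whole point is that it lands in $\mathrm{GL}_n(\Z)$, and your sketch gives no reason. One route, for finitely generated $A$:
\begin{itemize}
\item[$\cdot$] Extend $\rho$ to $\hat\rho\colon\Gamma\to\mathrm{GL}_n(\hat A)$, where $\hat A=\varprojlim_{|A/I|<\infty}A/I$.
\item[$\cdot$] Since $A\to\hat A$ is faithfully flat, it suffices that $\gamma\mapsto\hat\rho(\gamma)\otimes1$ and $\gamma\mapsto1\otimes\hat\rho(\gamma)$ agree in $\mathrm{GL}_n(\hat A\otimes_A\hat A)$.
\item[$\cdot$] They agree on $\Lambda$. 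Extensions into any $\mathrm{GL}_n(C)$ are unique, because finitely generated subgroups of $\mathrm{GL}_n(C)$ are residually finite. So they agree on $\Gamma$.
\end{itemize}
You also leave open the passage from representation categories to $G(A)$ for a general $G$ of finite type, where $\mathcal{O}(G)$ is not a finitely generated $A$-module.

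\textbf{Part (3).} The last step fails. $K$ has infinite index in $\prod U(n_i)$, so the coset-action argument of (a), which needs a finite permutation group, does not apply, and you give no substitute. Instead, write $K$ as an inverse limit of compact Lie groups (Peter--Weyl). Each of these is a compact, hence Zariski-closed, subgroup of some $\mathrm{GL}_n(\R)$, i.e.\ of the form $\mathbf H(\R)$. Then (2) and closure under inverse limits finish the proof.
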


    In \autoref{sec:adjoint_functors} we will use the class $\mathcal{C}$ to
    give the following characterisation of natural profiniteness for 
    \emph{adjoint} functors.
    \begin{Theorem}[Adjoint functors]
      Let $\mathcal{D}$ be a locally small category and let $F\colon \Group \to \mathcal{D}$
      be left adjoint to $G\colon \mathcal{D} \to \Group$. Then $F$ is a
      natural profinite invariant if and only if~$G$ factors through~$\mathcal{C}$.
    \end{Theorem}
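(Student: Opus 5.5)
The plan is to pass through the adjunction and the Yoneda lemma. For an object $D$ of $\mathcal{D}$ and a group homomorphism $\phi\colon\Lambda\to\Gamma$, the adjunction gives bijections
\begin{equation*}
  \Hom_{\mathcal{D}}(F(\Gamma), D) \cong \Hom_{\Group}(\Gamma, G(D)), \qquad
  \Hom_{\mathcal{D}}(F(\Lambda), D) \cong \Hom_{\Group}(\Lambda, G(D)).
\end{equation*}
These bijections are natural in the group variable. Under them, precomposition with $F(\phi)$ on the left corresponds to precomposition with $\phi$ on the right, so $F(\phi)^\ast$ is identified with $\phi^\ast$. Here "$G$ factors through $\mathcal{C}$" means that $G(D)\in\mathcal{C}$ for every object $D$ of $\mathcal{D}$.

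For the direction "$G$ factors through $\mathcal{C}$ implies $F$ is a natural profinite invariant", fix a Grothendieck pair $\phi\colon\Lambda\to\Gamma$. By hypothesis $G(D)\in\mathcal{C}$, so $\phi^\ast\colon\Hom(\Gamma,G(D))\to\Hom(\Lambda,G(D))$ is a bijection for every $D$. By the naturality above, $F(\phi)^\ast\colon\Hom_{\mathcal{D}}(F(\Gamma),D)\to\Hom_{\mathcal{D}}(F(\Lambda),D)$ is then a bijection for every $D$. Local smallness makes these hom-sets genuine sets, so the functors $\Hom_{\mathcal{D}}(F(\Gamma),\blank)$ and $\Hom_{\mathcal{D}}(F(\Lambda),\blank)$ land in $\Set$. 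The precomposition maps form a natural transformation between them, and we have just shown it is a natural isomorphism. The Yoneda lemma, in its contravariant embedding $\mathcal{D}^{\mathrm{op}}\to[\mathcal{D},\Set]$, is fully faithful and hence reflects isomorphisms. It follows that $F(\phi)$ is an isomorphism.

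Concretely, I would write out that inverse. Take $D=F(\Lambda)$ and let $\psi\colon F(\Gamma)\to F(\Lambda)$ be the unique morphism with $\psi\circ F(\phi)=\id_{F(\Lambda)}$. Then $F(\phi)\circ\psi$ and $\id_{F(\Gamma)}$ both have the same image under $F(\phi)^\ast$ (with $D=F(\Gamma)$), namely $F(\phi)$. Injectivity of $F(\phi)^\ast$ therefore forces $F(\phi)\circ\psi=\id_{F(\Gamma)}$.

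For the converse, suppose $F$ is a natural profinite invariant and let $D$ be an object of $\mathcal{D}$. For any Grothendieck pair $\phi$, the morphism $F(\phi)$ is an isomorphism, so $F(\phi)^\ast$ is a bijection on $\Hom_{\mathcal{D}}(\blank, D)$. Transporting through the natural adjunction bijection shows that $\phi^\ast\colon\Hom(\Gamma,G(D))\to\Hom(\Lambda,G(D))$ is a bijection. Since this holds for every Grothendieck pair, $G(D)\in\mathcal{C}$ by the definition of $\mathcal{C}$.

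The only real point of care is checking that the adjunction bijection is natural in the first variable in the correct direction: $\Phi(f\circ F(\phi))=\Phi(f)\circ\phi$, where $\Phi(f)=G(f)\circ\eta$ and $\eta$ is the unit. This follows from naturality of the unit, $\eta_\Gamma\circ\phi = GF(\phi)\circ\eta_\Lambda$. I would also remark that morphisms in $\mathcal{C}$ impose no extra condition, since $\mathcal{C}$ is defined as a class of groups, so factoring through it is simply a condition on the objects $G(D)$.
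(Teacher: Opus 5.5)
Your proposal is correct and follows essentially the same route as the paper: you identify $F(\phi)^\ast$ with $\phi^\ast$ through the natural adjunction bijection, and use the Yoneda lemma to pass between ``$F(\phi)^\ast$ is a bijection for every object of $\mathcal{D}$'' and ``$F(\phi)$ is an isomorphism''. Your explicit construction of the inverse and your check of naturality via the unit spell out details the paper leaves implicit.
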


    In combination with \autoref{thm:grothendieck_class_C} this allows us to
    give even more examples of natural profinite invariants.
    First, we will give some easy examples of invariants that are (not) natural
    profinite invariants in \autoref{sec:examples}. Additionally we will
    describe an approach of how to disprove natural profiniteness in \autoref{sec:strategy}.

    The examples we examine are summarised in \autoref{tab:results}.
    \begin{table}
      % \begin{center}
        \def\cmark{\textcolor{ForestGreen}{\ding{51}}}
        \def\xmark{\textcolor{Red}{\ding{55}}}
        \def\exampleautorefname{Ex.}
        \def\propositionautorefname{Prop.}
        \def\corollaryautorefname{Cor.}
        \def\arraystretch{1.3}
        % \begin{tabular}{r @{${}\colon{}$} l >{\raggedleft}p{.8cm} l >{\raggedleft}p{.8cm} l}
        %   \multicolumn{2}{c}{\textbf{Functor}}                              & \multicolumn{2}{c}{\textbf{profinite}}        & \multicolumn{2}{c}{\textbf{natural profinite}}  \\\toprule
        %               $\hat{\blank}$ & $\Group \to \Group$                  & \cmark & (by definition)                      & \cmark & (by definition)                        \\
        %              $\Ab{(\blank)}$ & $\Group \to \Abelian$                & \cmark & \cite[Prop.\ 3.2]{reid_survey}       & \cmark & (\autoref{prop:ab_natural})            \\\midrule
        %                        $\id$ & $\Group \to \Group$                  & \xmark & (\autoref{ex:trivial})               & \xmark & (\autoref{ex:trivial})                 \\
        %          $H_b^2(\blank; \R)$ & $\Group \to \Vect_\R$                & \xmark & \cite{bc_not_profinite}              & \xmark & (\autoref{prop:BC_not_natural})        \\
        %   $H_{n \geq 2}(\blank; \Z)$ & $\Group \to \Abelian$                & \xmark & (\autoref{cor:H_n_not_profinite})    & \xmark & (\autoref{cor:H_n_not_profinite})      \\\midrule
        %      $\operatorname{Forget}$ & $\Group \to \Set$                    & \cmark & (\autoref{ex:trivial})               & \xmark & (\autoref{ex:trivial})                 \\\midrule
        %                      $\Bohr$ & $\Group \to \operatorname{CHGroup}$  & \xmark & \cite[Prop.\ 6.1]{jaikin_lubotzky}   & \cmark & (\autoref{cor:application_adjoint})
        % \end{tabular}
        \renewcommand\tabularxcolumn[1]{m{#1}}
        \newcolumntype{C}{>{\centering\arraybackslash}X}%
        \newcolumntype{M}{>{\centering\arraybackslash}m}%
        \begin{tabularx}{.98\textwidth}{r @{${}\colon{}$} l M{1.9cm} M{1.9cm} C}
          \multicolumn{2}{c}{\textbf{Functor}}                              & \textbf{Profinite?} & \textbf{Natural Profinite?} & \textbf{Reference(s)}                                       \\\toprule
                      $\hat{\blank}$ & $\Group \to \Group$                  & \cmark              & \cmark                      & By definition                                               \\
                     $\Ab{(\blank)}$ & $\Group \to \Abelian$                & \cmark              & \cmark                      & \cite[Prop.\ 3.2]{reid_survey}, \autoref{prop:ab_natural}   \\\midrule
                               $\id$ & $\Group \to \Group$                  & \xmark              & \xmark                      & \autoref{ex:trivial}                                        \\
                 $H_b^2(\blank; \R)$ & $\Group \to \Vect_\R$                & \xmark              & \xmark                      & \cite{bc_not_profinite}, \autoref{prop:BC_not_natural}      \\
          $H_{n \geq 2}(\blank; \Z)$ & $\Group \to \Abelian$                & \xmark              & \xmark                      & \autoref{cor:H_n_not_profinite}                             \\\midrule
             $\operatorname{Forget}$ & $\Group \to \Set$                    & \cmark              & \xmark                      & \autoref{ex:trivial}                                        \\\midrule
                             $\Bohr$ & $\Group \to \operatorname{CHGroup}$  & \xmark              & \cmark                      & \cite[Prop.\ 6.1]{jaikin_lubotzky}, \autoref{cor:application_adjoint}
        \end{tabularx}
      \caption{Summary of examples}
      \label{tab:results}
      % \end{center}
    \end{table}
    In particular, we obtain:
    \begin{Theorem}[Beyond logical dependence]
      The notions of profinite invariance and natural profinite invariance
      are logically independent.
    \end{Theorem}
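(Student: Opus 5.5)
Logical independence amounts to exhibiting all four combinations of the two properties. By \autoref{tab:results} the plan is to produce one functor for each combination. Throughout, a functor $F$ counts as a \emph{profinite invariant} if $\hat\Gamma\cong\hat\Lambda$ implies $F(\Gamma)\cong F(\Lambda)$ for finitely generated residually finite $\Gamma,\Lambda$.

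\emph{Both properties.} The profinite completion $\hat{\blank}$ has both properties by definition. For an example with more content, take abelianisation. It is a profinite invariant because $\Ab\Gamma$ is finitely generated and determined by its finite quotients, as in \cite[Prop.\ 3.2]{reid_survey}. For natural profinite invariance, $\Ab{(\blank)}$ is left adjoint to the inclusion $\Abelian\to\Group$. Abelian groups are nilpotent, so by \autoref{thm:grothendieck_class_C} they lie in $\mathcal{C}$, and the Theorem on adjoint functors applies.

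\emph{Neither property.} Take the identity functor. Natural profinite invariance fails on any non-trivial Grothendieck pair, for instance the finitely generated ones of Platonov--Tavgen' \cite{platonov_tavgen}, since the inclusion $\Lambda\to\Gamma$ is not surjective. Profinite invariance fails too, since $\Lambda\not\cong\Gamma$ can be arranged in such a pair. If that is awkward to verify, I would instead use the classical examples of non-isomorphic finitely generated nilpotent or virtually cyclic groups with isomorphic completions.

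\emph{Profinite but not natural.} Take the forgetful functor $\Group\to\Set$. Every finitely generated infinite residually finite group is countably infinite. Hence the underlying sets of $\Gamma$ and $\Lambda$ are isomorphic whenever $\hat\Gamma\cong\hat\Lambda$: finiteness and the order of a finite group are visible in $\hat\Gamma$, and for finite groups $\hat\Gamma=\Gamma$. On the other hand, a non-trivial Grothendieck pair is a non-surjective injection of sets, so it is not a bijection.

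\emph{Natural but not profinite.} This is the main obstacle. I would use the Bohr compactification $\Bohr\colon\Group\to\operatorname{CHGroup}$. It is left adjoint to the forgetful functor $\operatorname{CHGroup}\to\Group$, whose image consists of compact Hausdorff groups. These lie in $\mathcal{C}$ by \autoref{thm:grothendieck_class_C}, so the adjoint Theorem shows that $\Bohr$ is a natural profinite invariant. That $\Bohr$ is not a profinite invariant I would cite from \cite[Prop.\ 6.1]{jaikin_lubotzky}: there are profinitely isomorphic groups whose Bohr compactifications differ, for example detected by finite-dimensional unitary representations with infinite image, or by the connected component. This step relies entirely on that external input rather than on anything proved here.

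Collecting the four examples yields the Theorem.
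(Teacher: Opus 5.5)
Your proposal is correct and takes essentially the paper's approach. It uses the same examples as \autoref{tab:results}: the forgetful functor shows that profinite invariance does not imply natural profinite invariance. The Bohr compactification shows the converse fails, since it is natural via \autoref{thm:adjoint_functors} with compact Hausdorff groups in $\mathcal{C}$, and not profinite by \cite[Prop.\ 6.1]{jaikin_lubotzky}; the completion, abelianisation and identity functors supply the remaining two combinations.
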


    \begin{question}
      While not part of this note, it is an interesting question of whether
      additional assumptions restore a logicial dependence:

      Let $F\colon \Group \to \mathcal{D}$ be a functor. What are necessary/sufficient
      conditions such that natural profinite invariance of $F$ implies profinite
      invariance of $F$?
      Conversely, what are necessary/sufficient conditions such that profinite
      invariance of $F$ implies natural profinite invariance?
    \end{question}

  \subsection*{Acknowledgements}
    I would like to thank my advisor Holger Kammeyer for fruitful discussions
    and reading a first draft.

    This work was funded by the RTG \enquote{Algebro-Geometric Methods in
    Algebra, Arithmetic, and Topology} (DFG 284078965). Its content is part
    of the authors doctoral thesis \cite[Chapter 5]{echtler_phd}.

  \section{First (Non-)Examples}
  \label{sec:examples}
    Let us consider some first simple examples of functors and study their
    (natural) profiniteness.

    \begin{example}[trivial examples]
      \label{ex:trivial}
      \leavevmode
      \begin{enumerate}
        \item Take the identity functor $\id\colon \Group \to \Group$. Since
          there are non-isomorphic groups with isomorphic profinite completions,
          this is \emph{not} a profinite invariant. Morever, as there are
          non-trivial Grothendieck pairs, it \emph{not} a natural profinite
          invariant neither.

        \item By definition the profinite completion functor $\hat{\blank}\colon \Group \to \Group$
          is both a profinite and a natural profinite invariant.

        \item Consider the forgetful functor $\operatorname{Forget}\colon \Group \to \Set$. Using
          that there are non-trivial Grothendieck pairs we see that it is
          \emph{not} a natural profinite invariant. However, it turns out to
          be a profinite invariant:

          A finitely generated group is either finite, in which case it is
          profinitely rigid, or countably infinite. Thus any two finitely
          generated profinitely isomorphic groups have the same cardinality.
      \end{enumerate}
      The example of the forgetful functor showes that
      \begin{center}
        profinite invariance $\centernot\Longrightarrow$ natural profinite invariance.
      \end{center}
    \end{example}

    \begin{remark}[Localisation]
      \label{rmk:localisation}
      Let $S$ be the collection of all Grothendieck pairs and let $\Group[S^{-1}]$
      be the localisation of the category of groups at this class of morphisms.
      By construction the localisation functor $L_S\colon \Group \to \Group[S^{-1}]$
      is a natural profinite invariant. Moreover, it is \emph{universal} in the
      sense that a functor $F\colon \Group \to \mathcal{D}$ is a natural profinite
      invariant if and only if it factors (up to natural isomorphism) through the
      localisation functor.
      \begin{equation*}
        \begin{tikzcd}
          \Group \arrow[r, "F"]\arrow[d, "L_S"'] & \mathcal{D} \\
          \Group[S^{-1}] \arrow[ur, dashed]
        \end{tikzcd}
      \end{equation*}
    \end{remark}

    One of the standard examples of a profinite property is being abelian and
    the abelianisation \cite[Proposition 3.2]{reid_survey} \cite[5.2]{bridson_shadows}. It turns out that
    the abelianisation is a natural profinite invariant as well.

    \begin{proposition}[Abelianisation]
      \label{prop:ab_natural}
      Let $\phi\colon \Lambda \to \Gamma$ be a Grothendieck pair. Then the
      induced map on the abelianisation $\Ab\phi\colon \Ab\Lambda \to \Ab\Gamma$
      is an isomorphism.
    \end{proposition}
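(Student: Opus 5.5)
The plan is to deduce the statement from \autoref{thm:grothendieck_class_C} by a Yoneda argument. The reason this should work is that abelianisation is left adjoint to the inclusion $\Abelian \to \Group$, and every abelian group is nilpotent, so every abelian group lies in $\mathcal{C}$.

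\textbf{Step 1: apply $\mathcal{C}$ to abelian targets.} Fix a Grothendieck pair $\phi\colon \Lambda \to \Gamma$ and an arbitrary abelian group $A$. By part~1 of \autoref{thm:grothendieck_class_C}, $A \in \mathcal{C}$. Hence restriction along $\phi$,
\begin{equation*}
  \phi^\ast\colon \Hom_\Group(\Gamma, A) \longrightarrow \Hom_\Group(\Lambda, A),
\end{equation*}
is a bijection.

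\textbf{Step 2: translate to abelianisations.} By the universal property of abelianisation, every homomorphism from a group to $A$ factors uniquely through the abelianisation. This gives bijections $\Hom_\Group(\Gamma, A) \cong \Hom_\Abelian(\Ab\Gamma, A)$ and $\Hom_\Group(\Lambda, A) \cong \Hom_\Abelian(\Ab\Lambda, A)$, natural in both variables. Under these identifications, $\phi^\ast$ becomes precomposition with $\Ab\phi$. The one point to check is that the square formed by the two adjunction bijections, $\phi^\ast$, and $(\Ab\phi)^\ast$ commutes. This follows because $\Ab\phi$ is defined by the condition that it intertwines the quotient maps $\Lambda \to \Ab\Lambda$ and $\Gamma \to \Ab\Gamma$. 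Consequently
\begin{equation*}
  (\Ab\phi)^\ast\colon \Hom_\Abelian(\Ab\Gamma, A) \longrightarrow \Hom_\Abelian(\Ab\Lambda, A)
\end{equation*}
is a bijection for every abelian group $A$, and it is natural in $A$.

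\textbf{Step 3: conclude by Yoneda.} A natural isomorphism of representable functors $\Hom_\Abelian(\Ab\Gamma, \blank) \cong \Hom_\Abelian(\Ab\Lambda, \blank)$ on $\Abelian$ that is induced by precomposition with $\Ab\phi$ forces $\Ab\phi$ to be an isomorphism. Concretely, take $A = \Ab\Lambda$ and choose $\psi\colon \Ab\Gamma \to \Ab\Lambda$ with $\psi \circ \Ab\phi = \id$. Then apply injectivity of $(\Ab\phi)^\ast$ for $A = \Ab\Gamma$ to the two maps $\Ab\phi \circ \psi$ and $\id$: both restrict to $\Ab\phi$ along $\Ab\phi$, so they are equal.

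I expect no real obstacle here; the substance lies entirely in \autoref{thm:grothendieck_class_C}. The only care needed is the naturality check in Step~2. This argument is a special case of the adjoint-functor theorem announced in the introduction, applied to the adjunction between $\Ab{(\blank)}$ and the inclusion $\Abelian \to \Group$.

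If one preferred to avoid the class $\mathcal{C}$, a fallback would be a direct argument. Since $\Lambda$ and $\Gamma$ are finitely generated, $\Ab\Lambda$ and $\Ab\Gamma$ are finitely generated abelian, hence residually finite. One would then identify their profinite completions with the abelianisations of the profinite completions (taking closures of commutator subgroups) and compare these. That route requires more bookkeeping, so I would use the Yoneda argument above.
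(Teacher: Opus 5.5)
Your proof is correct, but it is not the argument the paper gives for this proposition. It is instead the \enquote{more systematic way} that the paper defers to \autoref{cor:application_adjoint}: \autoref{thm:adjoint_functors} applied to the adjunction between $\Ab{(\blank)}$ and the inclusion $\Abelian \to \Group$, with abelian (hence nilpotent) groups lying in $\mathcal{C}$ by \autoref{thm:grothendieck_class_C}.

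The paper's proof here is essentially your fallback:
\begin{enumerate}
  \item It identifies $\hat{\Ab\Lambda}$ with $\Ab{\hat\Lambda}$ naturally.
  \item It gets injectivity of $\Ab\phi$ from the resulting commutative cube, since $\Ab\Lambda \hookrightarrow \Ab{\hat\Lambda}$ and $\Ab{\hat\phi}$ is an isomorphism.
  \item It gets surjectivity by applying right-exactness of profinite completion to $\Ab\Lambda \to \Ab\Gamma \to \coker(\Ab\phi) \to 1$. This forces $\hat{\coker(\Ab\phi)} \cong 1$, hence $\coker(\Ab\phi) \cong 1$, because finitely generated abelian groups are residually finite.
\end{enumerate}

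The trade-off is as follows. The paper's route is elementary and self-contained. Yours is shorter and conceptual, but all of its content sits in the cited black box. In fact, by \autoref{thm:adjoint_functors}, the assertion \enquote{every abelian group lies in $\mathcal{C}$} is equivalent to the proposition itself, so your argument proves nothing beyond what it imports. That is logically fine here, since \autoref{thm:grothendieck_class_C} is quoted from the literature.

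Your Steps 2 and 3 are correct. Step 3 only needs bijectivity of $(\Ab\phi)^\ast$ at $A = \Ab\Lambda$ and $A = \Ab\Gamma$, not naturality in $A$.
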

    \begin{proof}
      First we note that the universal properties of the profinite completion
      and the abelianisation can be used to show that the profinite completion
      of the abelianisation is naturally isomorphic to the (topological)
      abelianisation of the profinite completion. We simply write
      $\Ab{\hat{(\blank)}}$ for this composition of functors. This allows us to
      consider the commutative cube
      \begin{equation*}
        \begin{tikzcd}[sep=scriptsize]
          & \Ab{\hat\Lambda} \arrow[rr, "\Ab{\hat\phi}"]\arrow[from=dd, hook] && \Ab{\hat\Gamma} \\
          \hat\Lambda \arrow[ru]\arrow[rr, crossing over, near end, "\hat\phi"] && \hat\Gamma\arrow[ru] \\
          & \Ab\Lambda \arrow[rr, near start, "\Ab\phi"'] && \Ab\Gamma\arrow[uu, hook] \\
          \Lambda \arrow[ru]\arrow[rr, "\phi"']\arrow[uu, hook] && \Gamma \arrow[ru]\arrow[uu, crossing over, hook] \\
        \end{tikzcd}
      \end{equation*}
      where the vertical morphisms are the inclusion into the profinite
      completions and the \enquote{front-to-back} morphisms are the quotient
      maps to the abelianisation.

      Now by assumption $\phi\colon \Lambda \to \Gamma$ is a Grothendieck
      pair, whence $\hat\phi\colon \hat\Lambda \to \hat\Gamma$ and
      $\Ab{\hat\phi}\colon \Ab{\hat\Lambda} \to \Ab{\hat\Gamma}$ are
      isomorphism. Thus, in particular, $\Ab\phi\colon \Ab\Lambda \to \Ab\Gamma$
      is injective. Consider now the short exact sequence
      \begin{equation*}
        \begin{tikzcd}
          1 \arrow[r] & \Ab\Lambda \arrow[r, "\Ab\phi"] & \Ab\Gamma \arrow[r] & \coker(\Ab\phi) \arrow[r] & 1.
        \end{tikzcd}
      \end{equation*}
      As profinite completion is right-exact the induced sequence
      \begin{equation*}
        \begin{tikzcd}
          \Ab{\hat\Lambda} \arrow[r, "\Ab{\hat\phi}"] & \Ab{\hat\Gamma} \arrow[r] & \hat{\coker(\Ab\phi)} \arrow[r] & 1
        \end{tikzcd}
      \end{equation*}
      is exact as well. However, $\Ab{\hat\phi}$ was an isomorphism, so we obtain
      $\hat{\coker(\Ab\phi)} \cong 1.$ As $\coker(\Ab\phi)$ is a finitely
      generated abelian group, and thus residually finite, this implies
      $\coker(\Ab\phi) \cong 1$ as well. Hence $\Ab\phi\colon \Ab\Lambda \to \Ab\Gamma$
      is an isomorphism.
    \end{proof}

    In \autoref{cor:application_adjoint}, we will see a more systematic way of
    proving that the abelianisation is a natural profinite invariant.

    \begin{corollary}
      Let $A$ be an abelian group. Both $H_1(\blank; A)$ and $H^1(\blank; A)$ are
      natural profinite invariants.
    \end{corollary}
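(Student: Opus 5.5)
The plan is to reduce both statements to \autoref{prop:ab_natural} by writing $H_1(\blank; A)$ and $H^1(\blank; A)$ as functors of the abelianisation, naturally in the group. A functor $F$ that factors (up to natural isomorphism) as $F \cong F' \circ \Ab{(\blank)}$ automatically sends every Grothendieck pair $\phi$ to $F'(\Ab\phi)$. Since $\Ab\phi$ is an isomorphism by \autoref{prop:ab_natural}, and functors preserve isomorphisms, $F(\phi)$ is then an isomorphism.

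For homology, I will use the universal coefficient theorem. Since $H_0(\blank;\Z)=\Z$ is free, it gives a natural isomorphism
\begin{equation*}
  H_1(\Gamma; A) \cong H_1(\Gamma;\Z) \otimes_\Z A \cong \Ab\Gamma \otimes_\Z A,
\end{equation*}
where the Hurewicz-type identification $H_1(\Gamma;\Z)\cong\Ab\Gamma$ is natural in $\Gamma$. So $H_1(\blank;A) \cong (\blank\otimes_\Z A)\circ \Ab{(\blank)}$, and the argument above applies.

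For cohomology, I will use
\begin{equation*}
  H^1(\Gamma;A) \cong \Hom_\Group(\Gamma, A) = \Hom_{\Abelian}(\Ab\Gamma, A),
\end{equation*}
with trivial coefficients, where crossed homomorphisms are homomorphisms and principal ones vanish. The first identification is natural in $\Gamma$, and the second is the universal property of the abelianisation. Thus $H^1(\blank;A)$ factors contravariantly through $\Ab{(\blank)}$ (equivalently, as a functor to $\Abelian^{\mathrm{op}}$), and it also sends Grothendieck pairs to isomorphisms. Alternatively, universal coefficients give $\Hom(H_1,A)\oplus\operatorname{Ext}(H_0,A)$, and the $\operatorname{Ext}$ term vanishes.

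The only point that needs care is naturality of these identifications in the group variable, since a merely objectwise isomorphism would not suffice. I expect this to be routine, because both the universal coefficient sequence and the description of $H^1$ via cocycles are natural for group homomorphisms. Implicitly, $A$ is regarded as a trivial module.
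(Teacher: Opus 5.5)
Your proposal is correct and follows the paper's proof. Both arguments use the universal coefficient theorems to get natural isomorphisms $H_1(\blank; A) \cong H_1(\blank; \Z) \otimes_\Z A$ and $H^1(\blank; A) \cong \Hom_\Z\bigl(H_1(\blank; \Z), A\bigr)$, and then invoke \autoref{prop:ab_natural} via $H_1(\blank; \Z) \cong \Ab{(\blank)}$. Your added attention to naturality is a reasonable refinement: the identifications are the natural maps in the universal coefficient sequences, and the $\operatorname{Tor}$ and $\operatorname{Ext}$ terms vanish because $H_0 = \Z$ is free.
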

    \begin{proof}
      By the universal coefficient theorems for homology and cohomology, we have
      natural isomorphisms
      \begin{align*}
        H_1(\blank; A) &\cong H_1(\blank; \Z) \otimes_\Z A \\
        H^1(\blank; A) &\cong \Hom_\Z \bigl( H_1(\blank; \Z), A \bigr).
      \end{align*}
      Since $H_1(\blank; \Z) \cong \Ab{(\blank)}$ is a natural profinite
      invariant, we get the claim.
    \end{proof}

  \section{Natural Profiniteness of Adjoint Functors}
  \label{sec:adjoint_functors}

    In their paper, \citeauthor{jaikin_lubotzky} observed that the
    \emph{Bohr compactification} and the \emph{pro-algebraic completion}
    induce isomrophisms for every Grothendieck pair \cite[Corollary 1.3]{jaikin_lubotzky}. 
    Using the Yoneda lemma, we can abstract their proof and give a characterisation
    of the natural profinite invariance of \emph{adjoint} functors in terms of
    the class $\mathcal{C}$ studied by Grothendieck.

    \begin{theorem}[Nautral profinite invariance from adjointness]
      \label{thm:adjoint_functors}
      Let $\mathcal{D}$ be a locally small category and let $F\colon \Group \to \mathcal{D}$
      be left adjoint to $G\colon \mathcal{D} \to \Group$. Then $F$ is a natural
      profinite invariant if and only if for every $X \in \mathcal{D}$ we have
      $G(X) \in \mathcal{C}$, i.e., if $G$ factors over $\mathcal{C}$.
    \end{theorem}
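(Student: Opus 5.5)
The plan is to translate natural profinite invariance of $F$ into a statement about hom-sets via the adjunction and then apply the Yoneda lemma. Fix a Grothendieck pair $\phi\colon \Lambda \to \Gamma$. Since $\mathcal{D}$ is locally small, the morphism $F(\phi)\colon F(\Lambda) \to F(\Gamma)$ is an isomorphism in $\mathcal{D}$ if and only if, for every object $X \in \mathcal{D}$, the precomposition map
\begin{equation*}
  F(\phi)^\ast\colon \Hom_{\mathcal{D}}\bigl(F(\Gamma), X\bigr) \longrightarrow \Hom_{\mathcal{D}}\bigl(F(\Lambda), X\bigr)
\end{equation*}
is a bijection. This is the Yoneda lemma applied to the contravariant Yoneda embedding $\mathcal{D}^{\mathrm{op}} \to \operatorname{Fun}(\mathcal{D}, \Set)$, which is fully faithful and hence reflects isomorphisms. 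The forward direction is trivial.

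Next I use the adjunction isomorphism $\Hom_{\mathcal{D}}(F(\blank), X) \cong \Hom_{\Group}(\blank, G(X))$, which is natural in the group variable. Naturality gives the commutative square
\begin{equation*}
  \begin{tikzcd}
    \Hom_{\mathcal{D}}\bigl(F(\Gamma), X\bigr) \arrow[r, "F(\phi)^\ast"]\arrow[d, "\cong"'] & \Hom_{\mathcal{D}}\bigl(F(\Lambda), X\bigr)\arrow[d, "\cong"] \\
    \Hom_{\Group}\bigl(\Gamma, G(X)\bigr) \arrow[r, "\phi^\ast"'] & \Hom_{\Group}\bigl(\Lambda, G(X)\bigr),
  \end{tikzcd}
\end{equation*}
so $F(\phi)^\ast$ is bijective if and only if $\phi^\ast$ is bijective.

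Putting these together, $F$ is a natural profinite invariant exactly when, for every Grothendieck pair $\phi$ and every $X \in \mathcal{D}$, the map $\phi^\ast\colon \Hom(\Gamma, G(X)) \to \Hom(\Lambda, G(X))$ is bijective. Swapping the two universal quantifiers, this says that every $G(X)$ lies in $\mathcal{C}$, by the definition of $\mathcal{C}$. Both implications follow at once: if all $G(X) \in \mathcal{C}$, then each $F(\phi)$ is an isomorphism; conversely, if each $F(\phi)$ is an isomorphism, then the bijectivity of $\phi^\ast$ for all pairs places $G(X)$ in $\mathcal{C}$.

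The only point needing care is the Yoneda step: showing that bijectivity of $F(\phi)^\ast$ for all $X$ implies that $F(\phi)$ is an isomorphism. I will do this directly rather than quote it. Take $X = F(\Lambda)$ and let $\psi$ be the preimage of $\id_{F(\Lambda)}$, so that $\psi \circ F(\phi) = \id$. Then take $X = F(\Gamma)$ and observe that both $F(\phi)\circ\psi$ and $\id_{F(\Gamma)}$ are sent by $F(\phi)^\ast$ to $F(\phi)$. Injectivity of $F(\phi)^\ast$ forces them to be equal, so $\psi$ is a two-sided inverse of $F(\phi)$. Local smallness is used only so that these hom-sets are sets, matching the $\Set$-valued framework in which $\mathcal{C}$ is defined.
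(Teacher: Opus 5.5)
Your proposal is correct and follows essentially the same route as the paper: the naturality square of the adjunction identifies $F(\phi)^\ast$ with $\phi^\ast$ for every $X \in \mathcal{D}$, and the Yoneda lemma converts bijectivity of $F(\phi)^\ast$ for all $X$ into $F(\phi)$ being an isomorphism. The only differences are cosmetic: the paper phrases the argument with natural transformations and the Yoneda embedding, whereas you argue objectwise, swap the quantifiers explicitly, and verify the Yoneda step by hand via the inverse $\psi$.
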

    \begin{proof}
      Let $\phi\colon \Lambda \to \Gamma$ be a Grothendieck pair. By the adjunction
      of $F$ and $G$ we have a commutative diagram
      \begin{equation*}
        \begin{tikzcd}
          \Hom_\Group\bigl(\Gamma, G(\blank)\bigr) \arrow[r, "\phi^\ast"]\arrow[d, "\cong"'] &
          \Hom_\Group\bigl(\Lambda, G(\blank)\bigr) \arrow[d, "\cong"] \\
          \Hom_{\mathcal{D}}\bigl(F(\Gamma), \blank\bigr) \arrow[r, "F(\phi)^\ast"'] &
          \Hom_{\mathcal{D}}\bigl(F(\Lambda),\blank\bigr)
        \end{tikzcd}
      \end{equation*}
      whose morphisms are natural transformations.

      First, assume that for each $X \in \mathcal{D}$ we have $G(X) \in \mathcal{C}$.
      Then in particular
      \begin{equation*}
        \phi^\ast\colon \Hom_\Group\bigl(\Gamma, G(X)\bigr) \longrightarrow \Hom_\Group\bigl(\Lambda, G(X)\bigr)
      \end{equation*}
      is an isomorphism for every $X \in \mathcal{D}$. So the upper natural
      transformation is in fact a natural isomorphism. By commutativity, so is
      the lower natural transformation and using the Yoneda embedding, we obtain
      that $F(\phi)\colon F(\Lambda) \to F(\Gamma)$ is an isomorphism.
      As this holds for every Grothendieck pair, we obtain that $F$ is a natural
      profinite invariant.

      Conversely, assume that $F$ is a natural profinite invariant. Then
      $F(\phi)$ is an isomorphism, and thus $F(\phi)^\ast$ is a natural
      isomorphism. By commutativity, we get that $\phi^\ast$ is a natural
      isomorphism as well, i.e.\ for every $X \in \mathcal{D}$
      \begin{equation*}
        \phi^\ast\colon \Hom_\Group\bigl( \Gamma, G(X) \bigr) \longrightarrow \Hom_\Group\bigl( \Lambda, G(X) \bigr)
      \end{equation*}
      is an isomorphism. Again, this holds for every Grothendieck pair
      $\phi\colon \Lambda \to \Gamma$, and thus we obtain $G(X) \in \mathcal{C}$.
    \end{proof}

    \begin{corollary}[Examples of natural profinite invariants]
      \label{cor:application_adjoint}
      The following functors are natural profinite invariants:
      \begin{enumerate}
        \item the abelianisation $\Ab{(\blank)}\colon \Group \to \Abelian$,
        \item the Bohr compactification $\operatorname{Bohr}\colon \Group \to \operatorname{CHGroup}$, and
        \item the pro-algebraic completion $A\colon \Group \to \operatorname{pro-AlgGrp}$.
      \end{enumerate}
    \end{corollary}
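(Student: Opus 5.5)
The plan is to apply \autoref{thm:adjoint_functors} to each of the three functors in turn. For each one we exhibit a right adjoint $G$ and check, via \autoref{thm:grothendieck_class_C}, that $G$ lands in $\mathcal{C}$. The target categories are all locally small, so the theorem applies once the adjunctions are in place.

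\textbf{Abelianisation.} $\Ab{(\blank)}\colon \Group \to \Abelian$ is left adjoint to the inclusion $\Abelian \hookrightarrow \Group$. This is just the universal property $\Hom_\Group(\Gamma, A) \cong \Hom_\Abelian(\Ab\Gamma, A)$ for abelian $A$. Every abelian group is nilpotent, so by \autoref{thm:grothendieck_class_C}(1) the inclusion factors through $\mathcal{C}$, and $\Ab{(\blank)}$ is a natural profinite invariant. This gives an independent proof of \autoref{prop:ab_natural}.

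\textbf{Bohr compactification.} Here $\operatorname{CHGroup}$ is the category of compact Hausdorff groups with continuous homomorphisms. By definition, $\Bohr(\Gamma)$ is the universal compact Hausdorff group receiving a homomorphism from $\Gamma$. That is, every homomorphism $\Gamma \to K$ into a compact Hausdorff group factors uniquely through a continuous homomorphism $\Bohr(\Gamma) \to K$. So $\Bohr$ is left adjoint to the forgetful functor $\operatorname{CHGroup} \to \Group$. Its image consists of compact Hausdorff groups, which lie in $\mathcal{C}$ by \autoref{thm:grothendieck_class_C}(3).

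\textbf{Pro-algebraic completion.} This is the step I expect to need the most care, since it depends on choosing the right target category. I would take $\operatorname{pro-AlgGrp}$ to be the category of pro-algebraic groups over a fixed field $k$ (say $\mathbb{C}$), i.e.\ affine group schemes over $k$. The functor $G$ sends an affine group scheme $H$ to its group of $k$-points $H(k)$. The pro-algebraic completion $A(\Gamma)$ satisfies the universal property $\Hom(A(\Gamma), H) \cong \Hom_\Group(\Gamma, H(k))$, so $A \dashv G$.

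To see that $H(k) \in \mathcal{C}$, write $H = \varprojlim H_i$ with each $H_i$ affine of finite type. Then $H(k) = \varprojlim H_i(k)$. Each $H_i(k)$ lies in $\mathcal{C}$ by \autoref{thm:grothendieck_class_C}(2), and $\mathcal{C}$ is closed under inverse limits by part (1), so $H(k) \in \mathcal{C}$.

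The only subtlety is local smallness and the existence of the adjoint, that is, whether the universal property really holds in the chosen category. If necessary, I would restrict to the category in which the pro-algebraic completion is defined by exactly this universal property, as in \cite{jaikin_lubotzky}. With that choice, \autoref{thm:adjoint_functors} yields all three claims.
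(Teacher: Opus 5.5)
Your proposal is correct and follows the same route as the paper. Each functor is left adjoint to the forgetful (or inclusion) functor; \autoref{thm:grothendieck_class_C} shows that this right adjoint lands in $\mathcal{C}$; and the ``if'' direction of \autoref{thm:adjoint_functors} then applies. Your extra step for the pro-algebraic completion, writing $H(k)=\varprojlim H_i(k)$ with each $H_i$ of finite type and combining parts (2) and (1), spells out what the paper's one-line proof leaves implicit.
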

    \begin{proof}
      All of these functors are left-adjoint to the corresponding forgetful
      functor. Moreover, by \autoref{thm:grothendieck_class_C} these
      right-adjoints factor over $\mathcal{C}$, and thus the above theorem gives 
      the natural profiniteness.
    \end{proof}

    \citeauthor{jaikin_lubotzky} moreover proved that the Bohr compactification
    is \emph{not} a profinite invariant \cite[Proposition 6.1]{jaikin_lubotzky}.
    Whence we can conclude:
    \begin{corollary}
      The Bohr compactification is a natural profinite invariant which is
      \emph{not} a profinite invariant, so
      \begin{center}
        natural profinite invariance $\centernot\Longrightarrow$ profinite invariance.
      \end{center}
    \end{corollary}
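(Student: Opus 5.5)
The corollary has two halves, and I will take them in turn.

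\emph{Bohr is a natural profinite invariant.} This is item (2) of \autoref{cor:application_adjoint}. To make it self-contained, I recall why the adjunction criterion applies. The Bohr compactification $\Bohr\colon \Group \to \operatorname{CHGroup}$ is left adjoint to the forgetful functor $U\colon \operatorname{CHGroup} \to \Group$. Indeed, every homomorphism from $\Gamma$ to a compact Hausdorff group $K$ factors uniquely through the canonical map $\Gamma \to \Bohr(\Gamma)$ as a continuous homomorphism. The category $\operatorname{CHGroup}$ is locally small. By part (3) of \autoref{thm:grothendieck_class_C}, every compact Hausdorff group lies in $\mathcal{C}$, so $U$ factors over $\mathcal{C}$. \autoref{thm:adjoint_functors} then yields that $\Bohr(\phi)$ is an isomorphism for every Grothendieck pair $\phi$.

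\emph{Bohr is not a profinite invariant.} For this half I invoke \cite[Proposition 6.1]{jaikin_lubotzky}. It provides finitely generated residually finite groups $\Gamma_1, \Gamma_2$ with $\hat\Gamma_1 \cong \hat\Gamma_2$ but $\Bohr(\Gamma_1) \not\cong \Bohr(\Gamma_2)$. I would check that their statement is about exactly this notion, namely that non-isomorphic Bohr compactifications arise from profinitely isomorphic finitely generated residually finite groups. My expectation is that such examples come from arithmetic groups, where different real forms give different compact or non-compact factors.

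\emph{Conclusion and main obstacle.} The two halves together exhibit a functor that is a natural profinite invariant but not a profinite invariant. Hence natural profinite invariance does not imply profinite invariance. The only delicate point is matching the notion of "profinite invariant" for a functor with what \cite{jaikin_lubotzky} proves: it should mean that profinitely isomorphic finitely generated residually finite groups have isomorphic images. Once that is confirmed, the argument is immediate.
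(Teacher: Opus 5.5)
Your proposal is correct and follows the paper's argument: you get natural profinite invariance from item (2) of \autoref{cor:application_adjoint}, where Bohr is left adjoint to the forgetful functor and compact Hausdorff groups lie in $\mathcal{C}$ by \autoref{thm:grothendieck_class_C}, and you get the failure of profinite invariance by citing \cite[Proposition 6.1]{jaikin_lubotzky}. Your guess about what their examples look like is not needed for the argument.
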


    \begin{remark}
      The fact that there is a natural profinite invariant $F\colon \Group \to \mathcal{D}$
      which is \emph{not} a profinite invariant allows us to give another such
      example: the localisation functor $L_S\colon \Group \to \Group[S^{-1}]$
      at the class of Grothendieck pairs (\autoref{rmk:localisation}).

      If $L_S$ was a profinite invariant, then the factorisation of $F$ through
      $L_S$ would give that $F$ is a profinite invariant as well. But this is
      not the case.

      \begin{equation*}
        \begin{tikzcd}
          \Group \arrow[r, "F"]\arrow[d, "L_S"'] & \mathcal{D} \\
          \Group[S^{-1}] \arrow[ur, dashed]
        \end{tikzcd}
      \end{equation*}
    \end{remark}

  \section{Constructing Grothendieck Pairs}
  \label{sec:strategy}

    One way of producing Grothendieck pairs is the approach \citeauthor{platonov_tavgen}
    took to produce the first examples of finitely generated Grothendieck pairs
    \cite{platonov_tavgen}.

    \begin{theorem}[{\cite[Theorem 5.1]{bridson_grunewald}}]
      Let $\phi\colon G \to Q$ be a surjective group homomorphism and let
      \begin{equation*}
        P = \bigl\{ (g,h) \in G \times G \mid \phi(g) = \phi(h) \bigr\} \leq G \times G
      \end{equation*}
      be the associated fiber product. If $G$ is finitely generated, $Q$ has no
      finite quotients, and $H_2(G; \Z) = 0$, then the inclusion $P \to G \times G$
      is a Grothendieck pair.
    \end{theorem}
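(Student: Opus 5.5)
We have to show two things for the inclusion $\iota\colon P \to G \times G$: that $P$ is finitely generated (it is residually finite whenever $G$ is, being a subgroup of $G\times G$), and that $\hat\iota\colon \hat P \to \hat G \times \hat G$ is an isomorphism. The plan is to use the standard criterion: for finitely generated $P \le \Gamma$, the map $\hat P \to \hat\Gamma$ is an isomorphism if and only if every homomorphism $\Gamma \to K$ to a finite group $K$ is surjective on $P$, and every homomorphism $P \to K$ to a finite group extends uniquely to $\Gamma$. Surjectivity will come from $Q$ having no finite quotients. Injectivity is where $H_2(G;\Z)=0$ enters.

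\textbf{Step 1: surjectivity.} Let $f\colon G\times G \to K$ with $K$ finite, and let $N=\ker f$. Put $L = \{1\} \times \ker\phi \le P$. Then $P \cdot (\{1\}\times G) = G \times G$, since for $(g,h)$ we can write $(g,h) = (g,g)(1,g^{-1}h)$, and the diagonal lies in $P$. It therefore suffices to show that $f(P) \supseteq f(\{1\}\times G)$. Now $f(\{1\}\times \ker\phi)$ is normal in $f(\{1\}\times G)$, and the quotient is a finite quotient of $G/\ker\phi \cong Q$, hence trivial. So $f(\{1\}\times G) = f(\{1\}\times\ker\phi) \subseteq f(P)$, and $f|_P$ is onto. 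Consequently $\hat P \to \hat G \times \hat G$ is surjective. Equivalently, $P$ is dense in the profinite topology, and $\bar P = \hat G\times\hat G$.

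\textbf{Step 2: finite generation of $P$.} The argument for this is the key point of Bridson–Grunewald's proof, and I would use $H_2(G;\Z)=0$ here too. The approach is to use the central extension picture. The group $P$ is generated by the diagonal $\Delta G$ together with $\{1\}\times\ker\phi$, and $\ker\phi$ is the normal closure in $G$ of finitely many elements provided $Q$ is finitely presented. Since $\Delta G$ acts on $\{1\}\times \ker\phi$ by conjugation, $P$ is then generated by $\Delta G$ and those finitely many elements. If $Q$ is not assumed finitely presented, I expect one needs the cited theorem's actual hypotheses, namely that $Q$ is finitely presented as in Bridson–Grunewald. I would add that as a tacit assumption, or recover it from the construction.

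\textbf{Step 3: injectivity.} This is the main obstacle. Let $\psi\colon P\to K$ be a homomorphism to a finite group; we want it to extend to $G\times G$. Take the finite-index normal subgroup $M=\ker\psi \cap$ (suitable), and consider the closure. The structure of $P$ as an extension $1\to \ker\phi\times\ker\phi \to P \to Q\to 1$ is relevant here. Restricting to $\ker\phi \times 1$ and $1\times\ker\phi$, the images commute in $K$. Using $Q$ having no finite quotients, the image of $\ker\phi\times 1$ should become a central-type phenomenon. Concretely, the commutator $[(k,1),(g,g)]=([k,g],1)$ shows that $\psi$ restricted to $\ker\phi\times1$ factors through data controlled by $[\ker\phi,G]$ modulo $[\ker\phi,\ker\phi]$. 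The vanishing of $H_2(G)$, together with $H_2(Q)$ via the five-term exact sequence $H_2(G)\to H_2(Q)\to \ker\phi/[\ker\phi,G]\to H_1(G)\to H_1(Q)$, makes $\ker\phi/[\ker\phi,G]\cong H_2(Q)$. The plan is to show that the obstruction to extension lives in a finite quotient of $H_2(Q)$ on which $Q$ acts, and that this obstruction vanishes because $Q$ has no finite quotients; in Bridson–Grunewald's treatment this uses the universal central extension of $Q$. I would follow their argument here: show that $\psi$ agrees with the restriction of $\hat\psi$ defined via the projection $P\to G$, $(g,h)\mapsto g$, twisted on $1\times\ker\phi$, which yields an extension to $G\times G$.
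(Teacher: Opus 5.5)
\textbf{There is a genuine gap in Step 3, and under the hypothesis as printed that step cannot be completed.}

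\textbf{What works.} Your Steps 1 and 2 are correct. You are also right that the printed statement needs more hypotheses. First, $Q$ must be finitely presented: for $G$ finitely generated, $P$ is finitely generated if and only if $Q$ is finitely presented. Second, $G$ must be residually finite for the conclusion to be a Grothendieck pair in the paper's sense.

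\textbf{What goes wrong in Step 3.} It defers to Bridson--Grunewald, and the mechanism you sketch is wrong. Write $N=\ker\phi$. With $H_2(G;\Z)=0$ and $H_1(Q;\Z)=0$, the five-term sequence gives $0\to H_2(Q;\Z)\to N/[N,G]\to H_1(G;\Z)\to 0$. It does not give $N/[N,G]\cong H_2(Q;\Z)$. Moreover, no property such as $Q$ having no finite quotients can kill a class of $H_2(Q;\Z)$.

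\textbf{The printed statement is false.} It misquotes Bridson--Grunewald's Theorem 5.1, whose hypothesis is $H_2(Q;\Z)=0$; this is also the hypothesis the paper itself uses in its later Proposition and strategy section. With only $H_2(G;\Z)=0$ the claim fails, as follows.
\begin{itemize}
\item Take $G$ free of finite rank and $Q$ finitely presented, with no finite quotients and $H_2(Q;\Z)\neq 0$, e.g.\ Thompson's group $T$.
\item Then $\rho\colon P\to N/[N,G]$, $(g,h)\mapsto g^{-1}h\,[N,G]$, is a homomorphism onto a finitely generated abelian group. By Hopf's formula this group contains $H_2(Q;\Z)=(N\cap[G,G])/[N,G]$.
\item Compose $\rho$ with a finite quotient that is nonzero on $H_2(Q;\Z)$. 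This gives $\psi\colon P\to K$ with $K$ finite abelian.
\item This $\psi$ does not extend to $G\times G$, since every homomorphism $G\times G\to K$ kills $1\times(N\cap[G,G])$.
\item By your Step 1, $\hat P\to\hat G\times\hat G$ is surjective; since $\psi$ does not extend, it is not injective.
\end{itemize}

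\textbf{The missing argument, once the hypothesis is corrected to $H_2(Q;\Z)=0$.}
\begin{itemize}
\item Let $\psi\colon P\to K$ be onto a finite group, and put $A=\psi(N\times 1)$ and $B=\psi(1\times N)$. These are normal and commute.
\item $K/AB$ is a finite quotient of $P/(N\times N)\cong Q$, so $K=AB$.
\item $G$ acts on $B$: the element $g$ acts by conjugation with $\psi(h,g)$ for any $h$ with $(h,g)\in P$. Since $K=AB$ and $A$ centralises $B$, this action is by inner automorphisms of $B$, all of which are already realised by elements of $N$.
\item Hence the kernel $G_0$ of this action has finite index and $G=G_0N$.
\item Put $N_0=N\cap G_0$. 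The map $n\mapsto\psi(1,n)$ restricts to a homomorphism $N_0\to Z(B)$ that kills $[G_0,N_0]$.
\item The five-term sequence of $1\to N_0\to G_0\to Q\to 1$, together with $H_1(Q;\Z)=H_2(Q;\Z)=0$, gives $N_0/[G_0,N_0]\cong H_1(G_0;\Z)$. So this map extends to a homomorphism $\lambda\colon G_0\to Z(B)$.
\item Let $V=\ker\lambda\cap\ker\bigl(g\mapsto\psi(g,g)\bigr)$, which has finite index in $G$. Every $(g,h)\in P\cap(V\times V)$ satisfies $\psi(g,h)=\psi(g,g)\,\psi(1,g^{-1}h)=1$.
\item This is exactly injectivity of $\hat P\to\hat G\times\hat G$.
\end{itemize}
This is where superperfectness of $Q$ enters; it is what the universal central extension you mention encodes. $H_2(G;\Z)$ plays no role.
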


    Platonov and Tavgen' applied this to the morphism $F_4 \to Q$, with
    \begin{equation*}
      Q = \langle x_1, x_2, x_3, x_4 \mid 
      x_2 x_1 x_2^{-1} = x_1^2,
      x_3 x_2 x_3^{-1} = x_2^2,
      x_4 x_3 x_4^{-1} = x_3^2,
      x_1 x_4 x_1^{-1} = x_4^2 \rangle
    \end{equation*}
    the Higman group.

    Bridson and Grunewald combined these arguments with a Rips-construction
    and the \enquote{1-2-3 Theorem} in order to obtain the first finitely
    presented Grothendieck pairs.

    \begin{theorem}[Wise--Rips construction \cite{rips_construction_residually_finite}]
      There is an algorithm that associates with every finite group presentation
      a short exact sequence of groups
      \begin{equation*}
        1 \longrightarrow N \longrightarrow G \longrightarrow Q \longrightarrow 1,
      \end{equation*}
      where $Q$ is the group with the given presentation, $N$ is generated by
      three elements, and the group $G$ is a torsion-free, residually finite,
      hyperbolic (and thus finitely presented) group.
    \end{theorem}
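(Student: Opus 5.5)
The statement is a cited result, so the plan is to reconstruct the Rips construction and then add residual finiteness as an extra layer. Start from a finite presentation $Q = \langle x_1, \dots, x_n \mid r_1, \dots, r_m \rangle$ and introduce three new generators $a, b, c$. Define $G$ by the generators $x_1, \dots, x_n, a, b, c$ and the following relators:
\begin{align*}
  r_j\, w_j(a,b,c)^{-1} &\quad (1 \le j \le m), \\
  x_i^{\epsilon} y\, x_i^{-\epsilon}\, u_{i,y,\epsilon}(a,b,c)^{-1} &\quad (1 \le i \le n,\ y \in \{a,b,c\},\ \epsilon = \pm 1).
\end{align*}
Here the $w_j$ and $u_{i,y,\epsilon}$ are long positive words in $a, b, c$, chosen algorithmically. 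For instance, take products of the form $a b^{k_1} a b^{k_2} \cdots$ with pairwise distinct, rapidly growing exponents $k_\ell$. This choice guarantees that any piece, meaning a common subword of two relators or of one relator with itself in a different position, is short compared with the relator lengths. So the presentation satisfies $C'(\lambda)$ for any prescribed $\lambda$, such as $1/6$ or $1/12$. Everything here is explicit, so the association is algorithmic.

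Next, verify the exact sequence. Let $N = \langle a, b, c \rangle \le G$. The conjugation relators say that $x_i^{\pm1} N x_i^{\mp1} \subseteq N$, and $N$ is trivially normalised by $a, b, c$, so $N$ is normal. Killing $a, b, c$ turns the relators into exactly $r_1, \dots, r_m$, which gives $G/N \cong Q$. By construction $N$ is generated by three elements.

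Then establish the geometric properties from small cancellation theory. A finite $C'(1/6)$ presentation gives a hyperbolic group via Dehn's algorithm, and such a group is finitely presented. Torsion-freeness follows because, by the choice of exponents, no relator is a proper power: in $C'(1/6)$ groups, torsion comes only from proper-power relators.

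I expect residual finiteness to be the main obstacle, since small cancellation groups need not be residually finite in general. The approach I would try first is the cubical route. Under $C'(1/6)$, Wise's wallspace/cubulation theorem makes $G$ act properly and cocompactly on a CAT(0) cube complex. Since $G$ is hyperbolic, Agol's theorem then makes $G$ virtually special, hence linear over $\Z$ and therefore residually finite. If one wants to avoid Agol, the fallback is Wise's original argument. There one strengthens the condition to $C'(1/12)$ and chooses the words so that the presentation complex has a finite cover that is a \enquote{clean} or \enquote{perimeter-reduced} $VH$-type complex. Residual finiteness then follows from separability of the relevant subgroups. This second route needs the most care in choosing the words, which is why I expect it to be the technical heart of the proof.
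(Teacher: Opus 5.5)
The paper gives no proof of this statement. It quotes the result from Wise's paper, so there is no internal argument to compare with, and your proposal is a legitimate but genuinely different route.

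\textbf{Comparison of routes.} Wise's argument predates Agol's theorem by about a decade. It therefore has to choose the relators with extra care, so that residual finiteness of $G$ can be proved with the tools available then. You keep the classical Rips construction unchanged. You then use Wise's cubulation of finitely presented $C'(1/6)$ groups together with Agol's theorem to show that every such group is virtually compact special. This yields more than the statement asks for, namely linearity over $\Z$, in addition to residual finiteness. The price is two deep theorems instead of one hand-tailored construction.

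The remaining steps are all fine: the normality of $N$, the identification $G/N \cong Q$, hyperbolicity via Dehn's algorithm, and torsion-freeness from the absence of proper-power relators. Your fallback paragraph is too vague to count as an argument, but your main route does not need it.

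\textbf{A step that fails as written.} The one real error is your explicit choice of words: pairwise distinct but \emph{rapidly growing} exponents do not make pieces short. Suppose a relator contains a block $b^{k}$. The two cyclic conjugates starting at the first and at the second letter of that block are distinct elements of the symmetrised set, and both begin with $b^{k-1}$. So $b^{k-1}$ is a piece. With, say, $k_\ell = 2^\ell$, the last block is roughly half of the relator, so even $C'(1/6)$ fails.

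\textbf{Repair.} Make every block a small fraction of its word.
\begin{enumerate}
  \item Give each $w_j$ and each $u_{i,y,\epsilon}$ exactly $L$ blocks whose exponents are consecutive integers.
  \item Make these exponent ranges pairwise disjoint across all the words.
  \item Place all of them inside $[M,2M]$, where $M \ge \max_j |r_j|$ and $M$ is at least the total number of blocks.
\end{enumerate}
Because the exponents are distinct, a piece can never contain a complete subword $a b^{k} a$. Every piece therefore has length $O(M)$, while every relator has length at least $LM$. Hence $C'(\lambda)$ holds as soon as $L$ is large compared with $1/\lambda$. With this fix, the rest of your argument goes through.
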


    Applying this again to the Higman group gives a short exact sequence whose
    fiber product is finitely presented. By the 1-2-3 Theorem, this fiber product
    is finitely presented and thus gives an example of a finitely presented
    Grothendieck pair.

    \begin{theorem}{\cite[1-2-3 Theorem]{fiber_product_123}}
      Let $1 \to N \to G \overset\phi\to Q \to 1$ be a short exact sequence and
      detote by $P$ the fiber product of $\phi$. If $N$ is finitely generated
      (type $F_1$), $G$ is finitely presented (type $F_2$), and $Q$ is of type
      $F_3$, then $P$ is finitely presented.
    \end{theorem}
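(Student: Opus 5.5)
This is the 1-2-3 Theorem of Baumslag, Bridson, Miller and Short. The plan is to obtain a finite presentation of $P$ by combining three ingredients. The first is a section-type splitting of $P$. The second is a finite generating set coming from $N$ and $G$. The third is a finite generating set for the relations, coming from the fact that $\pi_2$ of a $K(Q,1)$-type complex is finitely generated as a $\Z Q$-module, which is exactly what type $F_3$ of $Q$ provides.

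\textbf{Structure of $P$.} The diagonal $\Delta = \{(g,g)\} \cong G$ lies in $P$. The subgroup $N_1 = N \times 1$ is normal in $P$, and $P = N_1 \rtimes \Delta$, since $(g,h) = (gh^{-1},1)(h,h)$ with $gh^{-1} \in N$. Hence $P$ is generated by a finite generating set $A$ of $G$, placed diagonally, together with a finite generating set $B$ of $N \times 1$.

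For relations I would take the following three families:
\begin{enumerate}
  \item the finitely many relators of $G$, written in the diagonal generators;
  \item for each $b \in B$ and each $a \in A^{\pm1}$, a relation $a b a^{-1} = w_{a,b}(B)$ expressing the conjugation action of $G$ on $N$;
  \item relations expressing the structure of $N$ itself.
\end{enumerate}
Family (iii) is the problem: $N$ need not be finitely presented. The key observation is that elements $(n,1)$ commute with $(1,m)$ for $m \in N$, and $(1,m) = (m,m)^{\ldots}$ can be expressed via the diagonal together with $(m^{-1},1)$. I would therefore add commutation relations $[b, \bar c\, c_\Delta] = 1$ for $b, c \in B$, where $\bar c$ denotes the inverse in the first factor. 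These are finitely many.

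\textbf{Sufficiency of these relations.} I would show that they suffice by working with the finite presentation $\langle A \mid R\rangle$ of $G$ and the induced presentation of $Q$, namely the relators $R$ together with words $S$ representing $B$. Each relation of $N$ is then an element of the relation module of $Q$-type. Choose a finite $3$-skeleton of a $K(Q,1)$ containing the presentation complex of $\langle A \mid R \cup S\rangle$. Then $\pi_2$ of this $2$-complex is generated as a $\Z Q$-module by finitely many spherical diagrams, the attaching maps of the $3$-cells. Each such diagram yields an identity among relations. Reading it off with the $S$-cells interpreted in $N \times 1$, and conjugating by diagonal elements, shows that every relation of $N$ follows from families (i)--(iii) together with finitely many extra relations, one per $3$-cell.

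\textbf{Main obstacle.} The hard step is this translation. One has to check that an arbitrary relation of $N$, viewed as a van Kampen diagram over $\langle A \mid R \cup S\rangle$, can be decomposed using the finitely many $\Z Q$-generators of $\pi_2$. One also has to check that the $Q$-action is realised inside $P$ modulo the commutation relations. I would try first to follow the Baumslag--Bridson--Miller--Short argument: describe an explicit presentation of the fibre product from a finite presentation of $G$ and finitely many generators of $\pi_2$, and verify it via a Tietze-transformation comparison with the obvious infinite presentation of $N \rtimes G$.
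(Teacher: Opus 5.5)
The paper gives no proof of this statement. It only quotes the 1-2-3 Theorem of Baumslag--Bridson--Miller--Short, and your outline has the same shape as their argument: an explicit finite presentation of $P=(N\times 1)\rtimes\Delta$, with the $F_3$ hypothesis used only to make $\pi_2$ of a finite presentation complex of $Q$ finitely generated over $\Z Q$. (You may use the particular complex of $\langle A\mid R\cup S\rangle$ because of the standard fact that this finiteness holds for \emph{every} finite presentation of an $F_3$ group.) The gap is that the step carrying the whole theorem is announced rather than proved. Knowing that the class in $\pi_2$ of a relation of $N$ is a $\Z Q$-combination of generators $\sigma_j$ only says one thing: the corresponding identity sequence is Peiffer equivalent (homotopic as a spherical diagram) to a product of $F(A)$-conjugates of the $\sigma_j^{\pm1}$. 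That is not a decomposition of words. You therefore have to show that your read-off is invariant under Peiffer moves \emph{inside the finitely presented group} $\tilde P$ defined by (i), (ii), the commutators and the extra relators. In $P$ itself every read-off is trivial, so the invariance is vacuous there. All the content sits in $\tilde P$, and the proposal contains no argument for it.

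The following has to be supplied. Write $c_\Delta=s_c(A_\Delta)$ and $c^{(2)}=\bar c\,c_\Delta$.

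First, in $\tilde P$ the subgroups $\langle B\rangle$ and $\langle c^{(2)} : c\in B\rangle$ are normal (by (i), (ii) and the commutators) and commute with each other. Killing $B$ leaves $\langle A\mid R\rangle=G$, compatibly with the second projection $P\to G$. Hence $\ker(\tilde P\to P)\subseteq\langle B\rangle$, and only words $v(B)$ with $v=1$ in $N$ remain to be killed.

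Second, for an identity sequence $D=(g_k,\rho_k^{\epsilon_k})_k$ over $\langle A\mid R\cup S\rangle$, put $\zeta(D)=\prod_k g_k(A_\Delta)\,\tau(\rho_k)^{\epsilon_k}\,g_k(A_\Delta)^{-1}$, with $\tau(r)=1$ for $r\in R$ and $\tau(s_b)=b$. A Peiffer exchange past an $R$-cell changes nothing, by (i). Past an $S$-cell it conjugates by $g(A_\Delta)\,b_\Delta^{\pm1}g(A_\Delta)^{-1}$, whereas the read-off needs $g(A_\Delta)\,b^{\pm1}g(A_\Delta)^{-1}$. The discrepancy $g(A_\Delta)\,(b^{(2)})^{\pm1}g(A_\Delta)^{-1}$ centralises $\langle B\rangle$ by the first point. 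So $\zeta$ is Peiffer invariant and $F(A)$-equivariant. Equivalently, $\zeta$ is the morphism from the free crossed module on $\langle A\mid R\cup S\rangle$ to the crossed module $\langle B\rangle\to\tilde P/\langle\langle c^{(2)}\rangle\rangle\cong G$.

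Third, let $v=b_1^{\epsilon_1}\cdots b_n^{\epsilon_n}$ be trivial in $N$, and set $p_i=s_{b_1}^{\epsilon_1}\cdots s_{b_{i-1}}^{\epsilon_{i-1}}$. Let $D_v$ be the identity sequence made of a van Kampen diagram for $s_{b_1}^{\epsilon_1}\cdots s_{b_n}^{\epsilon_n}$ over $R$, followed by the inverse of the fan $\prod_i p_i s_{b_i}^{\epsilon_i}p_i^{-1}$. A telescoping computation, again using $b_\Delta=b\,b^{(2)}$ and the commutators, gives $\zeta(D_v)=v(B)^{-1}$.

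With these three points and the classical identification of $\pi_2$ with Peiffer classes of identity sequences, the extra relators can be taken to be the $\zeta(\sigma_j)$, and they kill $\ker(\tilde P\to P)$. Without them, \enquote{one relation per $3$-cell} is only an expectation.
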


    Essentially the same arguments as used by Platonov--Tavgen' prove the following.
    \begin{proposition}[{\cite[Lemma 4.4]{bridson_fixed_point}}]
      \label{prop:ses_grothendieck_pairs}
      Let $1 \to N \to G \to Q \to 1$ be a short exact sequence of finitely
      generated groups. If $\hat{Q} \cong 1$ and $H_2(Q; \Z) \cong 0$, then
      $N \to G$ is a Grothendieck pair.
    \end{proposition}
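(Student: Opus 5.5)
The plan is to check the two halves of ``$\hat N \to \hat G$ is an isomorphism'' separately. For surjectivity we show that $N$ is dense in $\hat G$. For injectivity we show that the profinite topology of $G$ induces the full profinite topology on $N$. As in the rest of the paper, residual finiteness of $N$ and $G$ is tacitly part of the meaning of ``Grothendieck pair''; what has to be proved is that $\hat N \to \hat G$ is an isomorphism. Throughout we use two facts about $Q$.
\begin{itemize}
\item Since $\hat Q \cong 1$, $Q$ has no proper finite index subgroups. A finite index subgroup would have a normal core of finite index, hence a non-trivial finite quotient of $Q$.
\item $Q$ is perfect. Indeed $\Ab Q$ is a finitely generated abelian group with no non-trivial finite quotients, so $\Ab Q = 1$.
\end{itemize}
Combining $H_1(Q;\Z)=0$, $H_2(Q;\Z)=0$ and the universal coefficient theorem gives $H^2(Q;A)=0$ for every abelian group $A$. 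Hence every central extension of $Q$ splits.

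\emph{Surjectivity.} Let $U \trianglelefteq G$ have finite index. Then $G/NU$ is a finite quotient of $Q$, hence trivial, so $NU = G$. Thus $N$ maps onto every finite quotient of $G$. The image of $\hat N$ in $\hat G$ is compact and dense, hence everything.

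\emph{Injectivity.} Let $M \le N$ have finite index. We must find a finite index $U \le G$ with $U \cap N \subseteq M$.
\begin{enumerate}
\item Since $N$ is finitely generated, it has only finitely many subgroups of index at most $[N:M]$. Their intersection $K$ is characteristic of finite index in $N$, hence normal in $G$, and $K \subseteq M$.
\item Pass to $\bar G = G/K$, which contains the finite normal subgroup $\bar N = N/K$ with $\bar G/\bar N \cong Q$. Let $C = C_{\bar G}(\bar N)$. It is the kernel of the conjugation map $\bar G \to \operatorname{Aut}(\bar N)$, so it is normal of finite index in $\bar G$.
\item The image of $C$ in $Q$ has finite index, hence equals $Q$. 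Therefore $\bar G = \bar N C$ and $C \cap \bar N = Z(\bar N)$. This gives a central extension $1 \to Z(\bar N) \to C \to Q \to 1$.
\item By the vanishing of $H^2(Q;A)$, this extension splits as $C = Z(\bar N) \times S$ with $S \cong Q$.
\item Since $Q$ is perfect, $S = [C,C]$. This is characteristic in the normal subgroup $C$, hence normal in $\bar G$.
\item Now $\bar G = \bar N S$ and $S \cap \bar N \subseteq S \cap Z(\bar N) = 1$. Therefore $S$ has finite index $|\bar N|$ in $\bar G$.
\item Take $U$ to be the preimage of $S$ in $G$. Then $U$ has finite index and $U \cap N = K \subseteq M$.
\end{enumerate}

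The main obstacle is this injectivity step, specifically producing a finite index subgroup of $\bar G$ that meets $\bar N$ trivially. The hypothesis $H_2(Q;\Z)=0$ enters only here, through the splitting of the central extension. The delicate points to check are:
\begin{itemize}
\item that the complement $S$ is normal in $\bar G$, which is handled by identifying it with the commutator subgroup $[C,C]$;
\item that the universal coefficient computation really gives $H^2(Q;A)=0$, which needs both $H_1(Q;\Z)=0$ (so that the $\operatorname{Ext}$ term vanishes) and $H_2(Q;\Z)=0$.
\end{itemize}
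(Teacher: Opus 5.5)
Your proof is correct: surjectivity follows from $NU = G$ for every finite-index normal $U \trianglelefteq G$, and injectivity follows by passing to a characteristic finite-index $K \le N$ and splitting the central extension $1 \to Z(N/K) \to C_{G/K}(N/K) \to Q \to 1$ via superperfectness of $Q$, which produces the normal finite-index complement $S=[C,C]$ with $U \cap N = K$. The paper proves nothing here itself (it only cites Bridson's Lemma 4.4 and says the Platonov--Tavgen' argument carries over), and your proof is essentially that standard argument; you are also right that residual finiteness of $G$ has to be assumed tacitly for the conclusion to be a Grothendieck pair in the paper's sense.
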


    In combination with the Wise--Rips construction, this allows us to find
    many short exact sequences whose left most map is a Grothendieck pair.

  \section{A Strategy for Disproving Natural Profiniteness}
    Assume we are given a functorial group invariant with a \enquote{good} way of
    discribing what happens when applied to a short exact sequence. This
    might be exactness, right-exactness, a long exact sequence, a five-term exact
    sequence/spectral sequence, etc. Then one can try to find a suitable
    \enquote{seed} group $Q$ with $\hat{Q} \cong 1$ and $H_2(Q; \Z)$ such that
    the Grothendieck pair obtained from the Wise--Rips construction allows 
    to disprove the natural profinite invariance of the given invariant.

    Of course here the difficultiy lies in finding a good seed group.
    Let us give two examples to illustrate this strategy: the second bounded
    cohomology and the higher homology groups.

    \begin{proposition}
      \label{prop:BC_not_natural}
      The second bounded cohomology $H_b^2(\blank; \R)$ is \emph{not} a natural
      profinite invariant.
    \end{proposition}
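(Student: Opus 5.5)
We follow the strategy just described: construct a Grothendieck pair $N \to G$ via \autoref{prop:ses_grothendieck_pairs} and the Wise--Rips construction, and show that the induced map $H_b^2(G;\R) \to H_b^2(N;\R)$ is not an isomorphism. (Since $H_b^2$ is contravariant, we view it as a functor to $\Vect_\R^{\mathrm{op}}$.) The key input is the behaviour of bounded cohomology under short exact sequences with amenable or trivial-cohomology quotients. More precisely, there is an exact sequence in low degrees,
\begin{equation*}
  0 \longrightarrow H_b^2(Q;\R) \longrightarrow H_b^2(G;\R) \longrightarrow H_b^2(N;\R)^{G},
\end{equation*}
so the restriction map can only be an isomorphism if it lands in the $G$-invariants and $H_b^2(Q;\R)$ vanishes. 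Our plan is to break surjectivity rather than injectivity. We will exploit that $G$ is non-elementary hyperbolic while $N$ is a non-elementary normal subgroup.

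\textbf{Step 1: the seed group.} Choose $Q$ finitely presented with $\hat Q \cong 1$ and $H_2(Q;\Z)=0$. The Higman group is the natural candidate: it has no finite quotients, and its presentation is aspherical and balanced, so $H_2 = 0$. Apply the Wise--Rips construction to obtain $1 \to N \to G \to Q \to 1$ with $G$ torsion-free hyperbolic, residually finite, and $N$ finitely generated by three elements. By \autoref{prop:ses_grothendieck_pairs}, the inclusion $N \to G$ is a Grothendieck pair. One should also ensure $N$ is residually finite (it is a subgroup of $G$) and finitely generated, as the definition requires.

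\textbf{Step 2: compare $H_b^2$.} Both $G$ and $N$ are non-elementary. $N$ is an infinite normal subgroup of a torsion-free hyperbolic group, so it contains free subgroups and acts non-elementarily. By Epstein--Fujiwara, both $H_b^2(G;\R)$ and $H_b^2(N;\R)$ are infinite-dimensional, so a dimension count will not suffice. Instead we use the restriction map. Its image lies in the $G$-invariant part $H_b^2(N;\R)^G$, since inner conjugation acts trivially on $H_b^\bullet(G)$. The goal is therefore to produce a quasimorphism on $N$ whose class is not $G$-invariant, i.e.\ some $g \in G$ with $g\cdot[\omega]\neq[\omega]$. For this we would use Brooks/Epstein--Fujiwara counting quasimorphisms on $N$ attached to a hyperbolic element $w \in N$, chosen so that $w$ and $gwg^{-1}$ are not commensurable, i.e.\ have distinct axes. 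The homogenised quasimorphism is unbounded on $w$ but can be arranged to be bounded on all powers of $gwg^{-1}$, so it is not $G$-invariant modulo bounded functions and homomorphisms. Since $N$ is finitely generated, $H^1(N;\R)$ is finite-dimensional, while the space of such quasimorphisms is infinite-dimensional, so this can be arranged modulo homomorphisms. Hence restriction is not surjective and $H_b^2(\blank;\R)$ is not a natural profinite invariant.

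\textbf{Main obstacle.} The hardest step is making the non-invariance argument rigorous, because counting quasimorphisms are defined with respect to a word metric on $N$, and $N$ is distorted in $G$. The first approach to try is to work with the action of $N$ on the Cayley graph of $G$ (a hyperbolic space, non-elementary for $N$). One then applies Bestvina--Fujiwara's WWPD/independence criterion to the two loxodromics $w$ and $gwg^{-1}$: if they are independent, there is a quasimorphism separating them. If this fails, a fallback is to show injectivity fails by choosing a seed $Q$ with $H_b^2(Q;\R) \neq 0$ and still $\hat Q\cong 1$, $H_2(Q;\Z)=0$. The Higman group acts on a $\mathrm{CAT}(0)$ square complex with rank-one elements and so should have nonzero $H_b^2$. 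Combined with the exact sequence above, this makes restriction non-injective, which may in fact be the cleaner route.
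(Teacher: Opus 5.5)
The argument in your proposal that actually closes is your fallback, and it is exactly the paper's proof. It uses the Higman seed, the Wise--Rips construction, the exact sequence $0\to H_b^2(Q;\R)\to H_b^2(G;\R)\to H_b^2(N;\R)^G$, and concludes that restriction is not injective because $H_b^2(Q;\R)\neq 0$. (Only injectivity of inflation in degree $2$ is really needed: the composite $H_b^2(Q;\R)\to H_b^2(G;\R)\to H_b^2(N;\R)$ vanishes because $N\to Q$ is trivial.) The one input you leave unsupported is $H_b^2(Q;\R)\neq 0$. The inference \enquote{acts on a CAT(0) square complex with rank-one elements, hence nonzero $H_b^2$} is not valid as stated. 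That action has infinite stabilisers (conjugates of Baumslag--Solitar subgroups), and rank-one isometries only produce quasimorphisms under a properness or WPD-type hypothesis. For example, $BS(1,2)$ acts on its Bass--Serre tree with hyperbolic (hence rank-one) elements, yet it is amenable, so $H_b^2(BS(1,2);\R)=0$. The paper closes this step by noting that Higman's group is a non-trivial amalgamated product and invoking Fujiwara's theorem on $H_b^2$ of amalgams.

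Your primary route (breaking surjectivity) is genuinely different, but as written its separation step fails.
\begin{itemize}
\item \emph{Wrong condition.} \enquote{$w$ and $gwg^{-1}$ have distinct axes} is not the right hypothesis. For $g=n\in N$, the axes of $w$ and $nwn^{-1}$ are typically distinct. Yet every homogeneous quasimorphism on $N$ is conjugation-invariant, so $\bar\phi(nwn^{-1})=\bar\phi(w)$ and no separation is possible. The Bestvina--Fujiwara relation is equivalence up to the $N$-action. So you need $gwg^{-1}$ not $N$-conjugate to anything commensurable with $w^{\pm1}$, i.e.\ $g\notin N\cdot E(w)$, where $E(w)$ is the maximal cyclic subgroup of $G$ containing $w$. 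Such $g$ exists because $Q$ is infinite and the image of $E(w)$ in $Q$ is finite.
\item \emph{Homomorphism ambiguity.} Your dimension count does not rule out that $g^{-1}\cdot\bar\phi-\bar\phi$ is a homomorphism $N\to\R$, in which case the class would still be invariant. The clean fix is to choose $w\in[N,N]$. This subgroup is normal in $G$ and nontrivial, hence contains loxodromics. Then every homomorphism $N\to\R$ kills both $w$ and $gwg^{-1}$.
\end{itemize}
With these repairs, plus a Bestvina--Fujiwara or Hull--Osin separation result for the acylindrical action of $N$ on the Cayley graph of $G$, the route goes through. (Epstein--Fujiwara does not apply to $N$ directly, since $N$ need not be hyperbolic.) It would then only need $Q$ to be infinite rather than $H_b^2(Q;\R)\neq 0$, but at the cost of considerably heavier machinery than the paper's short non-injectivity argument.
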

    \begin{proof}
      We again begin with the Higman group $Q$. Applying the Wise--Rips
      construction we obtain a short exact sequence
      \begin{equation*}
        \begin{tikzcd}
          1 \arrow[r] & N \arrow[r, "\iota"] & G \arrow[r] & Q \arrow[r] & 1
        \end{tikzcd}
      \end{equation*}
      of finitely generated groups with $N \to G$ a Grothendieck pair.

      This short exact sequence induces a five-term exact sequence
      \begin{equation*}
        \begin{tikzcd}[sep=scriptsize]
          1 \arrow[r] &
          H_b^2(Q; \R) \arrow[r] &
          H_b^2(G; \R) \arrow[r]\arrow[dr, "H_b^2(\iota)"'] &
          H_b^2(N; \R)^H \arrow[r]\arrow[d, hook] &
          H_b^3(Q; \R) \arrow[r] &
          H_b^3(G; \R) \\
          &&& H_b^2(N; \R)
        \end{tikzcd}
      \end{equation*}
      in bounded cohomology \cite[Theorem 2.21]{frigerio_bounded_cohomology}.
      Now, by construction, $Q$ is a non-trivial amalgam \cite{higman_group},
      so we have that $H_b^2(Q; \R)$ is non-trivial (in fact, it is infinite
      dimensional) \cite{fujiwara_amalgam}. Hence, by exactness, the induced
      map $H_b^2(\iota)\colon H_b^2(G; \R) \to H_b^2(N; \R)^H \hookrightarrow H_b^2(N; \R)$
      is \emph{not} injective.
    \end{proof}

    \begin{remark}
      In an earlier paper joint with Holger Kammeyer \cite{bc_not_profinite}
      we disproved the profiniteness of the second bounded cohomology. The
      above proof does not provide any new examples of this failure: 

      By construction, $G$ is a non-elementary hyperbolic group. Moreover,
      as $N \to G$ is a (non-trivial) Grothendieck pair, we have that $N$ is
      an infinite normal subgroup of $G$. Hence both $G$ and $N$ are
      \emph{acylindrically hyperbolic groups}
      \cite[Corollary 1.5]{osin_acylindrically_hyperbolic}, and so both
      $H_b^2(G; \R)$ and $H_b^2(H; \R)$ are infinite dimensional
      \cite[Theorem A]{hamenstädt_hyperbolic}.
    \end{remark}

    Another example where whe can apply a similar strategy is the second
    homology group. Solving a problem posed by Baumslag, \citeauthor{bridson_reid}
    showed the following.
    \begin{theorem}[{\cite[Theorem 6.1]{bridson_reid}}]
      There exists a pair of finitely generated residually torsion-free
      nilpotent groups $N \hookrightarrow \Gamma$ that have the same nilpotent
      genus and the same profinite completion, but $\Gamma$ is finitely 
      presented while $H_2(N; \Q)$ is inifinite-dimensional.
    \end{theorem}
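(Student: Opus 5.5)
The plan is to realise the pair as a fibre product inside a direct product of free groups, in the spirit of \cite[Theorem 5.1]{bridson_grunewald}. I would take a finitely presented group $Q$ with $\hat Q \cong 1$ and $H_2(Q;\Z) \cong 0$ (a suitable Higman-type group), together with an epimorphism $\pi\colon F \to Q$ from a finitely generated free group. I would set $\Gamma = F \times F$ and let $N = P \le F \times F$ be the associated fibre product. Several properties are then immediate. $\Gamma$ is finitely presented. $P$ is finitely generated because $Q$ is finitely presented; this is the standard fact that the fibre product of a map onto a finitely presented group is finitely generated, i.e.\ the easy part of the 1-2-3 Theorem. Both groups are residually torsion-free nilpotent, since free groups are and the class passes to products and subgroups. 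Finally, since $H_2(F;\Z)=0$ and $Q$ has no finite quotients, the cited Theorem 5.1 shows that $P \hookrightarrow \Gamma$ is a Grothendieck pair, so $\hat P \cong \hat\Gamma$.

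For the nilpotent genus I would argue functorially. Finitely generated nilpotent groups lie in $\mathcal{C}$ by \autoref{thm:grothendieck_class_C}. Hence $\Hom(\Gamma, K) \to \Hom(P, K)$ is a bijection for every finitely generated nilpotent $K$. Applying this to $K = \Gamma/\gamma_c\Gamma$ and $K = P/\gamma_c P$ and using Yoneda, as in the proof of \autoref{thm:adjoint_functors}, shows that $P/\gamma_c P \to \Gamma/\gamma_c\Gamma$ is an isomorphism for every $c$. So $P$ and $\Gamma$ have the same nilpotent quotients, hence the same nilpotent genus.

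The remaining point is that $H_2(P;\Q)$ is infinite-dimensional. Write $R = \ker \pi$. The diagonal splits $P \cong R \rtimes F$, and $R$ is free, of infinite rank since $Q$ is infinite. The Lyndon--Hochschild--Serre spectral sequence for $1 \to R \to P \to F \to 1$ has only the rows $q=0,1$ and the columns $p=0,1$, so it degenerates and gives
\begin{equation*}
  H_2(P;\Q) \cong H_1\bigl(F; H_1(R;\Q)\bigr) = H_1(F; M_\Q),
\end{equation*}
where $M = R_{\mathrm{ab}}$ is the relation module, on which $F$ acts through $Q$.

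The main obstacle is computing $H_1(F; M_\Q)$. I would compare it with $H_1(Q; M_\Q)$ via the spectral sequence of $1 \to R \to F \to Q \to 1$ with coefficients in $M_\Q$. Then I would dimension-shift along the relation sequence $0 \to M \to \Z Q^n \to \Z Q \to \Z \to 0$, which identifies $H_1(Q;M_\Q)$ with $H_3(Q;\Q)$. The correction terms involve $H_0(Q; H_1(R;\Q)\otimes M_\Q)$, which is already huge, so in practice this term alone should force infinite dimension. If this comparison proves too delicate, the fallback is to choose the seed $Q$ so that $H_3(Q;\Q)$ is infinite-dimensional, i.e.\ $Q$ is not of type $\mathrm{FP}_3$ over $\Q$. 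Such a $Q$ can be taken as a suitable finitely presented perfect group with no finite quotients, with superperfectness arranged by passing to a universal central extension. The expectation is that this forces $\dim_\Q H_2(P;\Q) = \infty$, in line with the Baumslag--Roseblade/Bieri phenomenon for fibre products.
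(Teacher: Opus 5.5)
Your set-up is sound up to $H_2(P;\Q)\cong H_1(F;M_\Q)$. $P$ is finitely generated, $F\times F$ is finitely presented, and both are residually torsion-free nilpotent. $P\hookrightarrow F\times F$ is a Grothendieck pair; the hypothesis Bridson--Grunewald really use is $H_2(Q;\Z)=0$ with $Q$ finitely presented, and your seed has this. The Yoneda argument with nilpotent groups in $\mathcal{C}$ correctly gives equal nilpotent genus. Using $F\times F$ rather than a Wise--Rips group (as the paper does for its variant) is legitimate. It also spares you from arranging residual torsion-free nilpotence of the ambient group.

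The gap is the last step, which carries all the content. For $1\to R\to F\to Q\to 1$ with coefficients in $M_\Q$, the five-term sequence (using $H_2(F;M_\Q)=0$) is
\begin{equation*}
  0\to H_2(Q;M_\Q)\to (M_\Q\otimes M_\Q)_Q\to H_1(F;M_\Q)\to H_1(Q;M_\Q)\to 0 ,
\end{equation*}
and dimension shifting gives $H_2(Q;M_\Q)\cong H_4(Q;\Q)$. So the \enquote{huge} term only contributes its quotient by the image of $H_4(Q;\Q)$. You give no argument controlling this; \enquote{should force infinite dimension} is a heuristic, not a proof. Your fallback needs a finitely presented perfect group with no finite quotients and infinite-dimensional $H_3(-;\Q)$. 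You assert such a group exists but do not construct one, and its existence is exactly the non-trivial input: Bridson--Reid's Proposition 6.5, which the paper quotes as its seed group. Passing to the universal central extension is fine. However, \enquote{i.e.\ not of type $\mathrm{FP}_3$ over $\Q$} is strictly weaker than infinite-dimensional $H_3$. Also, the Higman group you start with is acyclic, so for it the fallback gives nothing.

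Either of two arguments closes the gap.

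(a) Take the Bridson--Reid seed. The displayed sequence ends in a surjection $H_1(F;M_\Q)\twoheadrightarrow H_1(Q;M_\Q)\cong H_3(Q;\Q)$, so $\dim_\Q H_2(P;\Q)=\infty$. You should state this surjection explicitly rather than as an \enquote{expectation}.

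(b) Keep the Higman group $Q$ and make your correction-term idea rigorous. Its standard presentation complex is aspherical: it is glued from one-relator $\mathrm{BS}(1,2)$ complexes along $\pi_1$-injective circles, and then two such pieces are glued along the rose on $x_1,x_3$, which carries a free subgroup of rank two in each piece. Hence $M\cong\Z Q^4$. By Shapiro's lemma, $H_1(F;M_\Q)\cong H_1(F;\Q Q)^4\cong H_1(R;\Q)^4$. This is infinite-dimensional because $R$ is a non-trivial normal subgroup of infinite index in $F_4$, hence free of infinite rank. So, for the statement exactly as formulated, the Platonov--Tavgen pair $P\hookrightarrow F_4\times F_4$ already works and no special seed with large $H_3$ is needed.
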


    Let us prove the following slight variation of this statement which is
    essentially already contained in their construction.
    \begin{proposition}
      There exists a Grothendieck pair $N \to G$ of finitely generated residually
      finite groups, where $G$ is hyperbolic and $H_2(N; \Z)$ is \emph{not}
      finitely generated.
    \end{proposition}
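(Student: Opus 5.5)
Fix a finitely presented seed group $Q$ with $\hat Q \cong 1$ and $H_2(Q;\Z) = 0$, but with $H_3(Q;\Z)$ \emph{not} finitely generated. Apply the Wise--Rips construction to a finite presentation of $Q$. This gives a short exact sequence
\begin{equation*}
  1 \longrightarrow N \longrightarrow G \longrightarrow Q \longrightarrow 1
\end{equation*}
with $G$ torsion-free, residually finite and hyperbolic, and $N$ generated by three elements. Then $N$ is finitely generated and residually finite, being a subgroup of $G$. By \autoref{prop:ses_grothendieck_pairs}, the inclusion $N \to G$ is a Grothendieck pair. It remains to show that $H_2(N;\Z)$ is not finitely generated.

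For this I would use the Lyndon--Hochschild--Serre spectral sequence $E^2_{pq} = H_p(Q; H_q(N;\Z)) \Rightarrow H_{p+q}(G;\Z)$. Since $G$ is torsion-free hyperbolic, it has a finite classifying space (the Rips complex) of dimension $2$ in the Rips/Wise setting. Being a $C'(1/6)$ small cancellation group, $G$ has cohomological dimension at most $2$, so $H_n(G;\Z)=0$ for $n\ge 3$; at minimum $H_3(G;\Z)$ is finitely generated. The differential $d^2\colon E^2_{3,0} = H_3(Q;\Z) \to E^2_{1,1} = H_1(Q; H_1(N;\Z))$ and $d^3\colon E^3_{3,0}\to E^3_{0,2}$ must together kill all of $H_3(Q;\Z)$ modulo the finitely generated image coming from $H_3(G;\Z)$.

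Now $H_1(N;\Z)$ is a finitely generated abelian group, since $N$ is $3$-generated. If I additionally arrange that $H_1(Q; H_1(N;\Z))$ is finitely generated, then a large infinitely generated piece of $H_3(Q;\Z)$ survives to $E^3_{3,0}$. It must then map injectively, up to finitely generated error, into $E^3_{0,2}$, a quotient of $H_0(Q; H_2(N;\Z)) = H_2(N;\Z)_Q$. So $H_2(N;\Z)_Q$ is not finitely generated, and hence neither is $H_2(N;\Z)$.

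To make $H_1(Q;H_1(N))$ tame, I would exploit $\hat Q = 1$. The $Q$-action on $H_1(N;\Z)\cong \Z^r\oplus T$ factors through the finite-image-or-linear group $\mathrm{Aut}$, so any map $Q\to \mathrm{GL}_r(\Z)$ or to the finite $\mathrm{Aut}(T)$ has finite image, which must be trivial because $\hat Q =1$. (Finitely generated linear groups are residually finite.) Hence $Q$ acts trivially, and $H_1(Q; H_1(N)) \cong H_1(Q;\Z)\otimes H_1(N)\oplus \mathrm{Tor}(\dots)$ by universal coefficients. This is finitely generated, indeed zero, because $H_1(Q;\Z)=0$ follows from $\hat Q=1$.

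The main obstacle is producing the seed $Q$: a finitely presented group with no finite quotients, $H_2(Q;\Z)=0$ and $H_3(Q;\Z)$ infinitely generated. I would try to build it as in Bridson--Reid. Take a free product or amalgam of copies of the Higman group, which is acyclic in low degrees, with a finitely presented group having infinitely generated $H_3$, such as a Stallings--Bieri-type group $\ker(F_2^4\to\Z)$-style example. Then kill finite quotients and $H_1$, $H_2$ by embedding into a suitable acyclic group. Mayer--Vietoris would be used to check that $H_2$ vanishes while $H_3$ stays infinitely generated. If that fails, I would fall back on verifying the needed properties directly for Bridson--Reid's seed group.
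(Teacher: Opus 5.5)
Your proof is correct and follows the paper's route. The seed $Q$ (finitely presented, no finite quotients, $H_2(Q;\Z)=0$, $H_3(Q;\Z)$ not finitely generated) goes into Wise--Rips. The trivial $Q$-action on $H_1(N;\Z)$ gives $E^2_{1,1}=0$. Finite generation of $H_3(G;\Z)$ then forces a non-finitely-generated image of $d^3_{3,0}$ inside a quotient of $H_2(N;\Z)_Q$. The paper additionally kills $E^2_{2,1}$ to get the exact sequence $H_3(G;\Z)\to H_3(Q;\Z)\to H_2(N;\Z)_Q\to H_2(G;\Z)\to 1$, which your leaner bookkeeping does not need.

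The paper just quotes Bridson--Reid's Proposition 6.5 for the seed, so your fallback is exactly its route. The speculative construction of $Q$ can be dropped; as phrased, embedding into an acyclic group would kill $H_3$ as well.
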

    \begin{proof}
      Bridson and Reid constructed a group $Q$ that is finitely presented,
      super-perfect, i.e.\ $H_1(Q; \Z) \cong H_2(Q; \Z) \cong 0$, has no finite
      quotients and has infinite dimensional $H_3(Q; \Q)$ \cite[Proposition 6.5]{bridson_reid}.
      Using the universal coefficient theorem, we get in particular that
      $H_3(Q; \Z)$ is not finitely generated.

      Using this group $Q$ as seed group we again obtain a short exact sequence
      \begin{equation*}
        \begin{tikzcd}
          1 \arrow[r] & N \arrow[r, "\iota"] & G \arrow[r] & Q \arrow[r] & 1
        \end{tikzcd}
      \end{equation*}
      of finitely generated groups, with $G$ hyperbolic and $\iota\colon N \to G$
      a Grothendieck pair. Applying the Hochschild--Serre spectral sequence
      to this short exact sequence, we obtain
      \begin{equation*}
        E^2_{p,q} \cong H_p\bigl( Q; H_q(N; \Z) \bigr) \Longrightarrow H_{p+q}( G; \Z).
      \end{equation*}
      As $N$ is finitely generated, $H_1(N; \Z) \cong \Ab{N}$ is a finitely
      generated abelian group. Since $Q$ has no finite quotients, the action
      of $Q$ on $H_1(N; \Z)$ has to be trivial. Thus we can use the universal
      coefficient theorem to obtain
      \begin{equation*}
        H_1\bigl( Q; H_1(N; \Z) \bigr) \cong \underbrace{H_1(Q; \Z)}_\text{$\cong 0$} \otimes H_1(N; \Z) \cong 0,
      \end{equation*}
      as well as
      \begin{equation*}
        H_2\bigl( Q; H_1(N; \Z) \bigr) \cong \underbrace{H_2(Q; \Z)}_\text{$\cong 0$} \otimes H_1(N; \Z) \oplus \operatorname{Tor}_1^\Z\bigl( \underbrace{H_1(Q; \Z)}_\text{$\cong 0$}, H_1(N; \Z) \bigr) \cong 0.
      \end{equation*}

      Hence the second page of the spectral sequence looks as follows
      \begin{center}
        \begin{tikzpicture}[scale=1.4]
          \draw[lightgray, ->] (-.4,0) -- (4.3,0) node[below]{$p$};
          \draw[lightgray, ->] (0,-.4) -- (0,3.3) node[left]{$q$};

          \node at (0,0) {$\Z$};
          \node at (1,0) {$0$};
          \node at (2,0) {$0$};
          \node at (3,0) {$H_3(Q; \Z)$};

          \node at (0, 1) {$H_1(N; \Z)$};
          \node at (0, 2) {$H_2(N; \Z)_Q$};

          \node at (1, 1) {$0$};
          \node at (2, 1) {$0$};

          \draw[->, shorten <=.65cm, shorten >=.25cm] (3, 0) -- node[below left]{$d^2$} (1,1);
          \draw[->, shorten <=.25cm, shorten >=.65cm] (2, 1) -- node[above right]{$d^2$} (0,2);

          \foreach \x/\y in {4/0, 3/1, 4/1, 1/2, 2/2, 3/2, 4/2, 0/3, 1/3, 2/3, 3/3, 4/3}{
            \draw[fill, gray] (\x,\y) circle (1pt);
          }

          \node at (2,-.5) {$E^2$};
        \end{tikzpicture}
      \end{center}
      and these modules survive on the third page, where the differential
      is part of the exact sequence 
      \begin{equation*}
        \begin{tikzcd}
          1 \arrow[r] &
          \ker d^3_{3,0} \arrow[r] & 
          H_3(Q; \Z) \arrow[r, "d^3_{3,0}"] & 
          H_2(N; \Z)_Q \arrow[r] & 
          H_2(G; \Z) \arrow[r] & 
          1.
        \end{tikzcd}
      \end{equation*}
      Moreover, the convergence towards $H_{p+q}(G; \Z)$ gives
      \begin{equation*}
        \begin{tikzcd}
          H_3(G; \Z) \arrow[r] &
          E^\infty_{3,0} \cong \ker d^3_{3,0} \arrow[r] &
          1.
        \end{tikzcd}
      \end{equation*}
      Combining these two sequences yields
      \begin{equation*}
        \begin{tikzcd}
          H_3(G; \Z) \arrow[r] &
          H_3(Q; \Z) \arrow[r] &
          H_2(N; \Z)_Q \arrow[r] &
          H_2(G; \Z) \arrow[r] &
          1.
        \end{tikzcd}
      \end{equation*}

      Now $H_3(Q; \Z)$ is \emph{not} finitely generated, while $H_3(G; \Z)$ is
      (as $G$ is hyperbolic and thus of type $F_\infty$). Thus from the exactness
      of the sequence we can deduce that $H_2(N; \Z)_Q$, and thus in particular
      $H_2(N; \Z)$, is \emph{not} finitely generated.
    \end{proof}

    Since $G$ is hyperbolic we have that $H_2(G; \Z)$ is finitely generated, 
    so in particular the induces morphism $H_2(N; \Z) \to H_2(G; \Z)$ is not an 
    isomorphism. Using a kind of dimension shifting argument this result
    can be lifted to the higher homology groups.

    \begin{corollary}
      \label{cor:H_n_not_profinite}
      For every $n \in \N_{\geq 2}$ the $n$-th homology group with integer
      coefficients $H_n(\blank; \Z)$ is neither a profinite invariant, nor a 
      natural profinite invariant.
    \end{corollary}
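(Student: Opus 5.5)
For every $n \geq 2$ I will produce a Grothendieck pair $N_n \to G_n$ in which $H_n(N_n;\Z)$ is not finitely generated while $H_n(G_n;\Z)$ is. The case $n=2$ is the preceding proposition: its pair $N \to G$ has $G$ hyperbolic, so $H_2(G;\Z)$ is finitely generated while $H_2(N;\Z)$ is not. The \enquote{dimension shifting} I have in mind is to take direct products with a group $T$ of type $F_\infty$ whose homology is simple and finitely generated. I will use $T = \Z^{n-2}$, or more economically iterate with $T=\Z$.

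First, if $\phi\colon \Lambda \to \Gamma$ is a Grothendieck pair and $T$ is finitely generated and residually finite, then $\phi\times\id_T\colon \Lambda\times T \to \Gamma\times T$ is again a Grothendieck pair. The groups are finitely generated and residually finite, the map is injective, and profinite completion commutes with finite direct products of finitely generated groups, since $\widehat{A\times B}\cong\hat A\times\hat B$. Hence the completed map is $\hat\phi\times\id_{\hat T}$, which is an isomorphism.

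Next I compute homology with the Künneth formula. With $T=\Z^{k}$, $k=n-2$, we have
\[
H_n(N\times\Z^k;\Z)\cong\bigoplus_{i+j=n}H_i(N;\Z)\otimes H_j(\Z^k;\Z)\;\oplus\;\bigoplus_{i+j=n-1}\operatorname{Tor}_1^\Z\bigl(H_i(N;\Z),H_j(\Z^k;\Z)\bigr).
\]
The Tor terms vanish because $H_j(\Z^k;\Z)$ is free. The summand $H_2(N;\Z)\otimes H_{k}(\Z^k;\Z)\cong H_2(N;\Z)$ appears, so $H_n(N\times\Z^k;\Z)$ has a direct summand that is not finitely generated, and hence is itself not finitely generated. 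On the other side, $G\times\Z^k$ is of type $F_\infty$, being a product of a hyperbolic group and a free abelian group, so $H_n(G\times\Z^k;\Z)$ is finitely generated. The two groups are therefore not isomorphic. This proves that $H_n(\blank;\Z)$ is not a natural profinite invariant: the induced map is not an isomorphism, indeed no isomorphism exists at all.

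For profinite invariance, note that the same pair consists of finitely generated residually finite groups $N\times\Z^k$ and $G\times\Z^k$ with isomorphic profinite completions, yet with non-isomorphic $H_n(\blank;\Z)$. So $H_n(\blank;\Z)$ is not a profinite invariant either.

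The main obstacle, already settled by the preceding proposition, is producing a Grothendieck pair with wild $H_2$. The remaining points to check carefully are the compatibility of profinite completion with products and that $G\times\Z^k$ is of type $F_\infty$. Both are standard facts.
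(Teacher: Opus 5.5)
Your proposal is correct and follows essentially the paper's argument. Both start from the pair $N \to G$ of the preceding proposition, take products with free abelian groups (which preserves Grothendieck pairs and type $F_\infty$), and use the Künneth formula. The only difference is that the paper inducts one copy of $\Z$ at a time via $H_{n+1}(N_n\times\Z;\Z)\cong H_{n+1}(N_n;\Z)\oplus H_n(N_n;\Z)$, whereas you multiply by $\Z^{n-2}$ at once and exhibit $H_2(N;\Z)$ directly as a direct summand.
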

    In fact we will prove that for every $n \geq 2$ there is a Grothendieck pair
    $N_n \to G_n$ such that $H_n(N_n; \Z)$ is \emph{not} finitely generated,
    while $G_n$ is of type $F_\infty$. Then $H_n(N_n; \Z)$ and $H_n(G_n; \Z)$
    are not abstractly isomorphic and so in particular $H_n(N_n; \Z) \to H_n(G_n; \Z)$
    is not an isomorphism.
    \begin{proof}
      For $n=2$ we can take $N_2 \to G_2$ to be the Grothendieck pair given
      by the above proposition. From here on we proceed by induction on $n$.

      Suppose we have already constructed a Grothendeick pair $\iota_n\colon N_n \to G_n$
      with $H_n(N_n; \Z)$ \emph{not} finitely generated and $G_n$ of type $F_\infty$.
      We now consider $\iota_n \times \id_\Z \colon N_n \times \Z \to G_n \times \Z$.
      As taking profinite completions is compatible with products, this is
      still a Grothendieck pair. Moreover, using the Künneth theorem, we see
      that $G_n \times \Z$ is still of type $F_\infty$ and
      \begin{equation*}
        H_{n+1}(N_n \times \Z; \Z) \cong H_{n+1}(N_n; \Z) \oplus H_n(N_n; \Z).
      \end{equation*}
      As $H_n(N_n; \Z)$ is \emph{not} finitely generated we obtain that
      $H_{n+1}(N_n \times \Z; \Z)$ is \emph{not} finitely generated as well.
      Hence $N_n \times \Z \to G_n \times \Z$ is the desired Grothendieck pair.
    \end{proof}
  \printbibliography
\end{document}